\def\.{\cdot}
\def\a{\alpha}
\def\vs{\vskip .6cm}
\def\la{\langle}
\def\ra{\rangle}
\def\beq{\begin{equation}}
\def\eeq{\end{equation}}
\def\bea{\begin{eqnarray*}}
\def\eea{\end{eqnarray*}}
\def\beaa{\begin{eqnarray}}
\def\eeaa{\end{eqnarray}}
\def\ba{\begin{array}}
\def\ea{\end{array}}
\def\f{\varphi}
\def \RM{\mathbb{R}}
\def \CM{\mathbb{C}}
\def \TM{\mathbb{T}}
\def \HM{\mathbb{H}}
\def \SM{\mathbb{S}}
\def\Ric{\mathrm{Ric}}
\def\id{\mathrm{id}}
\def\be{\begin{equation}}
\def\ee{\end{equation}}
\def\tr{\mathrm{tr}}
\def\so{\mathfrak{so}}
\def\SU{\mathrm{SU}}
\def\U{\mathrm{U}}
\def\G{\mathrm{G}}
\def\SO{\mathrm{SO}}
\def\End{\mathrm{End}}
\def\Cl{\mathrm{Cl}}
\def\Sp{\mathrm{Sp}}
\def\Spin{\mathrm{Spin}}
\def\Ker{\mathrm{Ker}}
\def\scal{\mathrm{scal}}
\def\T{\mathrm{T}}
\def\Z{{\mathcal Z}}
\newtheorem{epr}{Proposition}[section]
\newtheorem{ath}[epr]{Theorem}
\newtheorem{elem}[epr]{Lemma}
\newtheorem{ecor}[epr]{Corollary}
\theoremstyle{definition}
\title[Generalized Killing spinors]{Generalized Killing spinors on Einstein manifolds}
\author{Andrei Moroianu, Uwe Semmelmann}
\address{Andrei Moroianu \\ Universit\'e de Versailles-St Quentin \\
Laboratoire de Math\'ematiques \\ UMR 8100 du CNRS\\
45 avenue des \'Etats-Unis\\
78035 Versailles, France }
\email{am@math.polytechnique.fr}
\address{Uwe Semmelmann\\
Institut f\"ur Geometrie und Topologie \\
Fachbereich Mathematik\\
Universit{\"a}t Stuttgart\\
Pfaffenwaldring 57 \\
70569 Stuttgart, Germany
}
\email{uwe.semmelmann@mathematik.uni-stuttgart.de}
\date{\today}
\thanks{This work was partially supported by the contract ANR-10-BLAN 0105 ``Aspects
Conformes de la G{\'e}om{\'e}trie''. 
We thank Bernd Ammann for several suggestions of improvement, as well as Ilka Agricola, Diego
Conti, Vicente Cortes, Anna Fino and Thomas Friedrich for helpful comments.}
\begin{document}

\begin{abstract}
We study generalized Killing spinors on compact Einstein manifolds with positive scalar curvature.
This problem is related to the existence compact Einstein hypersurfaces in manifolds with parallel
spinors, or equivalently, in Riemannian products of flat spaces, Calabi-Yau, hyperk\"ahler, $\G_2$
and $\Spin(7)$ manifolds. 
\vs
\noindent 2010 {\it Mathematics Subject Classification}: Primary: {53C25, 53C27, 53C40, 83C05}
\smallskip

\noindent {\it Keywords}: {generalized Killing
  spinors, parallel spinors, Einstein hypersurfaces, hypo, half-flat, co-calibrated $\G_2$.} 
\end{abstract}

\maketitle

\section{Introduction}

A {\it generalized Killing spinor} on a spin manifold $(M,g)$ is a non-zero spinor $ \Psi \in \Gamma(\Sigma M)$
satisfying for all vector fields $X$ the equation 
\beq\label{gks}
\nabla_X\Psi=A(X)\.\Psi,
\eeq 
where $A\in\Gamma(\End^+(\T M))$ is  some symmetric endomorphism field \cite{bgm,kf00,kf01}. 
If $A$ is a non-zero multiple of the identity, $\Psi$ is called a Killing spinor
\cite{ba,bfgk}. 

The interest in generalized Killing spinors is due to the fact that they arise in a natural way as
restrictions of parallel spinors to hypersurfaces. 
More precisely, if $(M^n,g)$ is a hypersurface of $(\Z^{n+1},g^\Z)$ and $\Phi$ is parallel spinor
on $\Z$, then its restriction to $M$ is a generalized Killing spinor with respect to the symmetric
tensor $A$ equal to half the second fundamental form of $M$, cf. \cite{bgm,friedrich:98}.
Conversely, if $\Psi$ is a generalized Killing spinor on $(M,g)$ with respect to $A$, then there
exists a metric on an open subset $\Z$ of $M\times \RM$ whose restriction to $M\times \{0\}$ is $g$
and a parallel spinor on $\Z$ whose restriction to $M\times \{0\}$ is $\Psi$ in the following cases:
\begin{enumerate}
\item If $A$ is a constant multiple of the identity, i.e. $\Psi$ is a Killing spinor \cite{ba};
\item Slightly more generally, if $A$ is parallel \cite{morel03};
\item Even more generally, if $A$ is a Codazzi tensor \cite{bgm};
\item In the generic case, under the sole assumption that $A$ and $g$ are analytic \cite{amm}.
\end{enumerate}

The common feature of the first three cases is that the ambient metric can be constructed
explicitly. In the last case, the existence of the ambient metric is given by the
Cauchy-Kovalevskaya theorem, and this explains the analyticity assumption. It is actually shown in
\cite{amm} that this assumption cannot be dropped.

Our main objective in this article is to study generalized Killing spinors on Einstein manifolds.
In some sense, this problem can be seen as an analogue of the Goldberg conjecture, which states that
an almost K\"ahler compact Einstein manifold with positive scalar curvature is K\"ahler (this 
conjecture was proved by Sekigawa \cite{seki}). 

In order to understand this analogy on needs to express both problems in terms of $G$-structures.
An almost Hermitian manifold is equivalent to a manifold with $\U(m)$-structure. The intrinsic
torsion of such a structure has 4 irreducible components (for $m\ge 3$) and 
being almost K\"ahler is equivalent to the vanishing of 3 components out of 4. The Goldberg
conjecture simply says that if the manifold is compact and Einstein with positive scalar curvature,
then the fourth component has to vanish too.

On the other hand, 
in small dimensions $n\le 8$ every (half-) spinor is {\em pure}, in the sense that the spin group
acts transitively on the unit sphere of the spin representation $\Sigma_n$ (or $\Sigma_n^\pm$ for
$n=4$ and $n=8$). Correspondingly, a non-vanishing (half-) spinor induces a $G$-structure on $M$
where $G$ (the stabilizer of a vector of the spin representation) equals $\Spin(7)\subset \SO(8)$,
$\G_2\subset \SO(7)$, $\SU(3)\subset \SO(6)$, $\SU(2)\subset \SO(5)$, $\SU(2)\subset \SO(4)$ and
$\{1\}\subset \SO(3)$ for $8\ge n\ge 3$ respectively. 
Being a generalized Killing spinor is
equivalent to the vanishing of certain components of the intrinsic
torsion of this $G$-structure.
More precisely, it is well known that the structure reduction defined by a generalized Killing
spinor is {\em co-calibrated} $\G_2$ (cf. \cite{cs06,fg}) for $n=7$, {\em half-flat} (cf.
\cite{chs,hi03}) for $n=6$ and {\em hypo} (cf. \cite{cs06}) for $n=5$. Note that for $n=4$ or $n=8$
a generalized Killing spinor $\Psi$ is never chiral (unless it is parallel), and each chiral part
$\Psi^+$ and $\Psi^-$ defines a structure reduction along the open set where it is non-vanishing.
The analogue of the Golberg conjecture in this setting (which turns out to be false in general, see
below) would be that a generalized Killing spinor on a compact Einstein manifold with positive
scalar curvature is necessarily Killing.

Note also that in this context, the embedding result in \cite{amm} for manifolds with generalized
Killing spinors
can be seen as a generalization to arbitrary dimensions of similar results by Bryant, Conti, Hitchin
and Salamon in small dimensions, cf. \cite{br, cs06, cs07, hi03}. 

Surprisingly, it turns out that the problem of finding all generalized Killing spinors on a given
spin manifold is out of reach at the present state of our knowledge. In dimension 2 already, the
fact that every generalized Killing spinor on $\SM^2$ is a Killing spinor, is non-trivial and
follows from Liebmann's theorem \cite{ku} (see Section \ref{dim2}).
Moreover, on the simplest Riemannian
manifold of dimension 3, the round 3-dimensional sphere, there is no classification available.
However, one can show that $\SM^3$ carries generalized Killing spinors which are not Killing spinors
(Section \ref{4.1} below).

In dimension 4, the analogue of the Goldberg conjecture holds. In Theorem \ref{dim4} below we show
that every generalized Killing spinor on a compact 4-dimensional Einstein manifold with positive
scalar curvature is Killing (and thus the manifold is isometric to $\SM^4$, cf.
\cite{ba,bfgk,hi86}). 

A similar result holds in dimension 5 for the round sphere (cf. Theorem \ref{dim5}). It
is presently unknown whether other 5-dimensional Einstein manifolds, e.g. the Riemannian product
$\SM^2(\tfrac1{\sqrt2})\times \SM^3$, carry generalized Killing spinors which are not Killing.

In dimensions 6 and 7 there are several examples of Einstein manifolds carrying generalized Killing
spinors which are not Killing. These examples correspond to half-flat structures on the Riemannian 
product $\SM^3\times\SM^3$ constructed by Schulte-Hengesbach \cite{sh}, who actually classified 
all left-invariant half-flat structures on $S^3\times S^3$  (see also Madsen and Salamon \cite{ms}) 
and to co-calibrated $\G_2$-structures on any 7-dimensional 3-Sasakian manifold,
including the sphere $\SM^7$,  constructed by Agricola and Friedrich \cite{af10} (see Section
\ref{4.4} below).

Finally, no examples of generalized Killing spinors on positive Einstein manifolds in dimension
$n\ge 8$ are known, other than Killing spinors on spheres, Einstein-Sasakian and 3-Sasakian
manifolds \cite{ba}.
We believe however that it should be possible to construct examples, at least in the 3-Sasakian
case,
using methods similar to those in \cite{af10}.

\section{Preliminaries}

For basic definitions and results on spin manifolds we refer to \cite{bfgk} and \cite{lm}.
Let $(M^n,g)$ be an $n$-dimensional Riemannian spin manifold with Levi-Civita connection $\nabla$.
The real spinor bundle $\Sigma M$ is endowed with a
connection, also denoted by $\nabla$, and a
Euclidean product $\la.,.\ra$ which is parallel with respect to $\nabla$:
$$\partial_X\la\Psi,\Phi\ra=\la\nabla_X \Psi,\Phi\ra+\la\Psi,\nabla_X\Phi\ra,\qquad\forall\ X\in\T
M,\ \Psi,\Phi\in\Gamma(\Sigma M).$$
The Clifford product with vectors is parallel with respect to $\nabla$ and skew-symmetric with
respect to $\la.,.\ra$, whence
\beq\label{xy}
\la X\cdot Y\cdot \Psi,\Psi\ra=-g(X,Y)\la\Psi,\Psi\ra,\qquad\forall\ X,Y\in\T M,\ \Psi\in\Sigma M.
\eeq

The Riemannian curvature $\mathcal R$ 
and the curvature $R^{\Sigma M}$ of the spinor bundle are related by 
\beq\label{curv-0}
R^{\Sigma M}_{X, Y}\, \Psi \;=\; \tfrac12 \mathcal R (X \wedge Y) \cdot \Psi \qquad\forall\ X,Y\in
\T M,\ \Psi\in\Sigma M ,
\eeq
where  $\mathcal R : \Lambda^2M \rightarrow \Lambda^2 M$ denotes the curvature operator 
defined by
$$
g( \mathcal R (X \wedge Y), U \wedge V ) = g( R_{X, Y}U, V).
$$ 
(In particular, the curvature operator on the standard sphere is minus the identity). Recall that
the Clifford multiplication with $2$-forms is defined via the equation
\beq\label{2f}
(X \wedge Y) \cdot \Psi \;=\; X \cdot Y \cdot \Psi \;+\; g(X, Y ) \, \Psi .
\eeq

Throughout this article we will identify $1$-forms and bilinear forms with vectors and
endomorphisms respectively, by the help of the Riemannian metric. In particular it makes sense to
speak about (skew)-symmetric endomorphism fields. The corresponding spaces will be denoted by
$$
\End^\pm(\T M)_p:=\{A\in\End(\T M)_p,\ |\ g(AX,Y)=\pm g(X,AY)\ \forall\ X,Y\in\T_p M\}.
$$
If $A\in \Gamma(\End^+(\T M))$ and $ \{ e_i \} $ is a local orthonormal frame, then 
\beq\label{trace}
\sum_{i=1}^n e_i \cdot A(e_i) \cdot \Psi=-\tr(A)\,\Psi.
\eeq

Applying the
first Bianchi identity the curvature relation  \eqref{curv-0} yields the well-known formula (see also \cite{bfgk}):
\beq\label{ricci}
\Ric(X) \cdot  \Psi \;=\; - 2 \sum_{i=1}^n e_i \cdot R^{\Sigma M} _{X, e_i} \, \Psi
\eeq
which together with \eqref{trace} yields:
\beq\label{scal}
\scal\, \Psi \;=\; - \sum_{i=1}^n e_i \cdot \Ric(e_i) \cdot \Psi .
\eeq

\section{Generalized Killing spinors}

Consider now a  generalized Killing spinor $\Psi$ on $(M,g)$, i.e. a spinor satisfying  the
equation $\nabla_X\Psi=A(X)\cdot\Psi$ for some symmetric endomorphism field $A$. Taking the scalar
product with 
$\Psi$ in this equation shows that the norm of $\Psi$ is constant. By rescaling, we may assume that $|\Psi|^2=1$.
Using \eqref{gks} together with \eqref{trace} shows that 
$$
D\Psi=-\tr(A)\Psi,
$$
where $D$ denotes the Dirac operator. We thus get
$$
D^2\Psi=\tr^2(A)\Psi-d\tr(A)\cdot\Psi.
$$
Moreover, taking a further covariant derivative in \eqref{gks} yields
$$
\nabla^*\nabla\Psi=-\sum_{i=1}^n (\nabla_{e_i} A)e_i\cdot\Psi-A(e_i)\cdot
A(e_i)\cdot\Psi=-\sum_{i=1}^n (\nabla_{e_i} A)e_i\cdot\Psi+\tr(A^2)\Psi,
$$
so the Lichnerowicz formula implies
\beq\label{sc}
\tfrac14\scal\Psi=D^2\Psi-\nabla^*\nabla\Psi=\tr^2(A)\Psi-d\tr(A)\cdot\Psi+\sum_{i=1}^n
(\nabla_{e_i} A)e_i\cdot\Psi-\tr(A^2)\Psi.
\eeq

Let us extend the action of $A$ to $2$-forms by
$A(X \wedge Y) = A(X)  \wedge A(Y)$. Using \eqref{2f}, the generalized Killing equation~\eqref{gks},
and the curvature relation~\eqref{curv-0}, it follows that 
\beq\label{curv}
\tfrac12 \, \mathcal R (X  \wedge Y) \cdot \Psi\;=\; 
R^{\Sigma M}_{X, Y} \, \Psi 
 \;=\; [(\nabla_X A)Y  \,-\, (\nabla_Y A)X  ] \cdot \Psi \;-\, 2\, A(X \wedge Y) \cdot \Psi.
\eeq

\begin{elem}
If $a$ denotes the trace of $A$ and $\delta A: = - \sum_{i=1}^n (\nabla_{e_i} A)e_i$ denotes the
divergence of $A$, then the following relations hold.
\medskip
\beq
\sum_{i=1}^n e_i \wedge (\nabla_{e_i} A)X \cdot \Psi  \;=\; [\tfrac12 \Ric(X) + 2 A^2(X) - 2a A(X)]
\cdot \Psi ,
\label{two}
\eeq
\beq
0   \;=\; \delta A \; +  \; d a  ,  \label{three1}
\eeq
\beq
\scal  \;=\; 4 a^2 \;-\; 4 \tr A^2 . \label{three2}
\eeq
\end{elem}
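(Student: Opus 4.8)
The plan is to extract all three identities from a single \emph{master equation} obtained by feeding the curvature relation \eqref{curv} into the Ricci formula \eqref{ricci}, and then to separate its ``scalar'' and ``Clifford'' content by pairing with $\Psi$. First I would set $Y=e_i$ in \eqref{curv}, Clifford-multiply by $e_i$ on the left, sum over $i$, and compare with \eqref{ricci}. This splits $\Ric(X)\cdot\Psi$ into three sums. The sum $-2\sum_i e_i\cdot(\nabla_X A)e_i\cdot\Psi$ is handled by the trace identity \eqref{trace} applied to the symmetric endomorphism $\nabla_X A$, whose trace equals $da(X)$; it contributes $2\,da(X)\Psi$. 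The sum $2\sum_i e_i\cdot(\nabla_{e_i}A)X\cdot\Psi$ is rewritten with \eqref{2f} as a genuine $2$-form part $2\sum_i(e_i\wedge(\nabla_{e_i}A)X)\cdot\Psi$ plus a scalar part $2(\delta A)(X)\Psi$, using $\sum_i g(e_i,(\nabla_{e_i}A)X)=\sum_i g((\nabla_{e_i}A)e_i,X)=-(\delta A)(X)$.

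The remaining sum $4\sum_i e_i\cdot A(X\wedge e_i)\cdot\Psi$ is the computational heart of the argument. Writing $A(X\wedge e_i)=A(X)\wedge A(e_i)$, expanding with \eqref{2f}, pushing $e_i$ past $A(X)$ via the Clifford relation $e_i\cdot A(X)=-A(X)\cdot e_i-2g(e_i,A(X))$, and applying \eqref{trace} once more (together with $\sum_i g(e_i,A(X))A(e_i)=A^2(X)$), one collapses it to $4a\,A(X)\cdot\Psi-4A^2(X)\cdot\Psi$. Assembling the three contributions yields
\[
\Ric(X)\cdot\Psi \;=\; 2\sum_{i=1}^n\bigl(e_i\wedge(\nabla_{e_i}A)X\bigr)\cdot\Psi \,+\, 2(da+\delta A)(X)\,\Psi \,+\, 4a\,A(X)\cdot\Psi \,-\, 4A^2(X)\cdot\Psi .
\]

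The observation that disentangles the three claims is that, since $|\Psi|^2=1$, Clifford multiplication by a vector and by a $2$-form both send $\Psi$ into its orthogonal complement: $\la Z\cdot\Psi,\Psi\ra=0$ by the skew-symmetry of Clifford multiplication, and $\la(U\wedge V)\cdot\Psi,\Psi\ra=0$ follows from \eqref{xy} together with \eqref{2f}. Hence taking the scalar product of the master equation with $\Psi$ annihilates the $\Ric(X)\cdot\Psi$, $2$-form, $A(X)$ and $A^2(X)$ terms, leaving only $0=2(da+\delta A)(X)$, which is precisely \eqref{three1}. Substituting $da+\delta A=0$ back into the master equation removes the scalar term, and dividing by $2$ gives exactly \eqref{two}.

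Finally, \eqref{three2} follows from the Lichnerowicz computation \eqref{sc}: its right-hand side reads $(a^2-\tr A^2)\Psi-(da+\delta A)\cdot\Psi$, so pairing with $\Psi$ kills the Clifford term (or, equivalently, one invokes \eqref{three1} directly), leaving $\tfrac14\scal=a^2-\tr A^2$. The only delicate point in the whole argument is the sign and contraction bookkeeping in the $A(X\wedge e_i)$ term; once that is carried out, everything else is a formal consequence of the pairing-with-$\Psi$ trick.
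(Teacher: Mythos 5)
Your proposal is correct and follows essentially the same route as the paper: contract \eqref{curv} against $e_i\cdot$, compare with \eqref{ricci}, split off the scalar parts via \eqref{2f} and \eqref{trace}, read off \eqref{three1} by pairing with $\Psi$, reinject to get \eqref{two}, and deduce \eqref{three2} from \eqref{sc}. All the sign and contraction bookkeeping in your $A(X\wedge e_i)$ computation checks out against the paper's.
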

\proof
From \eqref{2f}, \eqref{trace}, \eqref{ricci} and \eqref{curv} we get for every tangent vector $X$:
\bea -\tfrac12\Ric(X)\cdot\Psi
&=&\sum_{i=1}^n e_i \cdot R^{\Sigma M} _{X, e_i} \, \Psi=\sum_{i=1}^n e_i\cdot [(\nabla_X A)e_i 
\,-\,  (\nabla_{e_i} A)X  ] \cdot \Psi \\
&&-\, 2 \sum_{i=1}^n e_i\cdot [A(X) \cdot A(e_i) +g(A(X),A(e_i))]\cdot \Psi\\
&=&-\tr(\nabla_XA)\Psi-\sum_{i=1}^n e_i\cdot (\nabla_{e_i} A)X\cdot\Psi\\&&+\,4
A^2(X)\cdot\Psi-2A(X)\cdot a\Psi-2A^2(X)\cdot\Psi\\
&=&[-da(X)-\delta A(X)]\Psi-\sum_{i=1}^n e_i \wedge (\nabla_{e_i} A)X \cdot \Psi +[2 A^2(X) - 2a
A(X)] \cdot \Psi.
\eea
Taking the scalar product with $\Psi$ in this equation and using the fact that the Clifford product
with 1- and 2-forms is skew-symmetric yields 
\eqref{three1}, and reinjecting in the same equation gives \eqref{two}. Finally, \eqref{three2}
follows from \eqref{sc} and \eqref{three1}.
\qed

\medskip

In order to rewrite the right hand side of \eqref{two} we introduce the symmetric endomorphism
$$
B\;:=\; A^2 \;-\; a A\;+\; \tfrac14  \Ric  .
$$
 Note that $B$ is traceless because of \eqref{three2} and $B$ vanishes if $A$ is a multiple of the
identity. We introduce the notation
$$
T^Z \;=\; \sum_{i=1}^n e_i \wedge (\nabla_{e_i} A) Z
\quad
\mbox{and}
\quad
T \;=\; \sum_{i=1}^n  T^{e_i} \otimes e_i  .
$$
Then $T^Z$ is a $2$-form on $M$ and, considering $A$ as a $1$-form on $M$ with values in $\T M$, we have
\beq\label{da1}
T^Z(X,Y) = g((\nabla_X A)Y  \,-\, (\nabla_Y A)X,Z) =  g((d^\nabla A)(X, Y),Z)  .
\eeq
The tensor  $T=d^\nabla A$ can also be considered as a map $T : \Lambda^2 M \rightarrow \T M$ by defining 
$$
g(T(X \wedge Y),Z) = T^Z(X, Y).
$$
Let $\sigma$ be an arbitrary $2$-form and $Z$ any vector field on $M$. Then  \eqref{curv} and
\eqref{two} can be rewritten as
\begin{align}
T(\sigma ) \cdot  \Psi  \quad & = \quad  [\, \tfrac12 \,  \mathcal{R} (\sigma) \, \;+ \; 2\,
A(\sigma) \, ] \cdot \Psi 
\label{curv-2}, &\forall\ \sigma\in\Lambda^2M \\[1.5ex]
T^Z \cdot \Psi  \quad & = \quad  2\, B(Z) \cdot \Psi & \forall\ Z\in \T M. \label{two-2}
\end{align}

\bigskip

\section{Generalized Killing spinors on low-dimensional Einstein manifolds}

\subsection{The case of dimension $2$} \label{dim2}
Any $2$-dimensional Einstein spin manifold of positive scalar curvature is homothetic to
the round sphere $\SM^2$. Using classical rigidity results it it easy to show that every generalized
Killing spinor on $\SM^2$ is a Killing spinor. 

Indeed, if $\Psi$ is a spinor satisfying
\eqref{gks} then every point of $\SM^2$ has a neighbourhood which embeds isometrically in $\RM^3$
with second fundamental form $2A$. In particular, the determinant of $A$ is constant equal to
$\tfrac14$ by Gauss' Theorema Egregium. If $A$ is not equal to $-\tfrac\id 2$, then there exists a
point in  $\SM^2$ where one of its eigenvalues attains its maximum which is strictly larger than
$\tfrac12$ and where the other eigenvalue attains its minimum, which is strictly smaller than
$\tfrac12$.

On the other hand, Liebmann's Theorem \cite{ku} states that
if at a non-umbilic point of a surface $S$ in $\RM^3$ one of the principal curvatures has a local
maximum and the other one has a local minimum, then the Gaussian curvature of $S$ is non-positive
at that point. This contradiction shows that $A$ has to be scalar.

\subsection{The case of dimension $3$} \label{4.1}
Any $3$-dimensional  Einstein manifold of positive scalar curvature is locally homothetic to the
round sphere $\SM^3$. We will show that $\SM^3$ carries generalized Killing spinors which are not
Killing. 

Recall that in dimension 3 the Clifford action of the volume form on the spin bundle is the
identity \cite{lm}. This readily implies 
\beq\label{v3}
\omega\cdot\Psi=-*\omega\cdot\Psi,\qquad\forall \ \omega\in\Lambda^2\SM^3,\ \Psi\in\Sigma \SM^3.
\eeq

It is well-known that $\SM^3$ carries an orthonormal frame of left-invariant Killing vector fields
$\{\xi_1,\xi_2,\xi_3\}$ satisfying
\beq\label{kvf1}
\nabla_{\xi_1}\xi_2=-\nabla_{\xi_2}\xi_1=\xi_3,\qquad
\nabla_{\xi_2}\xi_3=-\nabla_{\xi_3}\xi_2=\xi_1,\qquad
\nabla_{\xi_3}\xi_1=-\nabla_{\xi_1}\xi_3=\xi_2.
\eeq
One can express this in a more concise way by saying that any left-invariant vector field $\xi$ on
the Lie group $\SM^3$ satisfies
\beq\label{kvf}
\nabla_{X}\xi=*(X\wedge\xi),\qquad\forall\ X\in \T\SM^3.
\eeq

Let $\Phi$ be a Killing vector field on $\SM^3$ with Killing constant $\tfrac12$:
\beq\label{phi} \nabla_X\Phi=\tfrac12 X\cdot\Phi,\qquad\forall\ X\in \T\SM^3,
\eeq
and let $\xi$ be a unit left-invariant Killing vector field.
Using \eqref{2f}, \eqref{v3} and \eqref{kvf} we compute the covariant derivative of the spinor
$\Psi:=\xi\cdot\Phi$:
\bea
\nabla_X\Psi&=&(\nabla_X\xi)\cdot\Phi+\tfrac12\xi\cdot X\cdot\Phi\\
&=&-(X\wedge\xi)\cdot\Phi-\tfrac12X\cdot\xi\cdot\Phi-g(X,\xi)\Phi\\
&=&-X\cdot\xi\cdot\Phi-g(X,\xi)\Phi-\tfrac12X\cdot\xi\cdot\Phi-g(X,\xi)\Phi\\
&=&-\tfrac32X\cdot\Psi+2g(X,\xi)\xi\cdot\Psi.
\eea
This shows that $\Psi$ is a generalized Killing spinor corresponding to the symmetric endomorphism field
$$X\mapsto A(X):=-\tfrac32X+2g(X,\xi)\xi.$$

As a matter of fact, note that $A$ is not a Codazzi tensor.
\medskip 

\subsection{The case of dimension $4$}
We assume  that $\Psi$ is a generalized Killing spinor on a compact oriented $4$-dimensional
Einstein manifold $(M,g)$ of positive scalar curvature. We thus have $\Ric = \lambda g$, with
$\lambda = \tfrac{\scal}{4} > 0$, and like before we may assume that $\Psi$ is scaled to have unit
length.  Then  \eqref{three2} reads $a^2 - \tr A^2 = \lambda$.

In dimension $4$ the spin representation splits as $\Sigma = \Sigma^+ \oplus \Sigma^-$, where
$\Sigma^\pm $ are the $\pm 1$-eigenspaces of 
the multiplication with the volume element and are interchanged by Clifford multiplication with
vectors. Correspondingly,
$\Psi$ splits as $\Psi \;=\; \Psi^+ \;+\; \Psi^-$ with
\beq\label{pm}
\nabla_X \Psi^\pm \;=\; A(X) \cdot \Psi^\mp .
\eeq

Let $M_0$ denote the open set $p \in M$ with $\Psi^-_p \neq 0$. 
We claim that $M_0$ is dense. Indeed, if $U$ were a non-empty
open subset of $M\setminus M_0$, then \eqref{pm} yields $A(X)\cdot \Psi^+=0$ for all $X\in\T U$, so
$A|_U=0$. By \eqref{pm} again, $\Psi^+$ is parallel on $U$, so the Ricci tensor vanishes on $U$,
contradicting the fact that $\scal>0$.

Let $h : =|\Psi^-|^2$ be the length function of $\Psi^-$ and let  $\eta$ be the vector field on $M$ given by 
\beq\label{eta}g(\eta,X) = \la X\cdot\Psi^+,\Psi^-\ra,\qquad\forall \ X\in\T M.\eeq
For every $p\in M_0$ the injective map $X \in \T_p M \mapsto X \cdot \Psi^- \in \Sigma^+_pM$ is
bijective since $\dim \T _pM = \dim \Sigma^+_pM$.
Let $\xi$ denote the vector field on $M_0$ defined by $\Psi^+ = \xi \cdot \Psi^-$. Using \eqref{xy} we get 
$\eta(X)=-h\,g(X,\xi).$
Moreover, since $1 = |\Psi|^2 =  |\Psi^+|^2  +  |\Psi^-|^2 = |\Psi^-|^2 (1 + |\xi|^2 )$ 
we infer $|\xi|^2 = \tfrac{1}{h}-1$ and thus
\beq\label{et}|\eta|^2 = h - h^2 .\eeq

\begin{elem}\label{dh}
\begin{enumerate}[(i)]
\item $ \quad dh \;=\; 2\, A(\eta )  $
\item  $\quad  \nabla_X \eta \;=\; (1 - 2h)\, A(X) $
\item $\quad d \eta \;=\; 0, \quad \delta \eta \;=\; -\,(1 - 2h) \, a $
\end{enumerate}
\end{elem}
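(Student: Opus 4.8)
The plan is to obtain all three identities by direct computation from the splitting equation \eqref{pm}, the definition \eqref{eta} of $\eta$, and the algebraic relation \eqref{xy}, keeping in mind that $|\Psi^+|^2=1-h$ because $|\Psi|^2=1$. For (i) I would simply differentiate $h=\la\Psi^-,\Psi^-\ra$: since $\nabla$ is metric and $\nabla_X\Psi^-=A(X)\cdot\Psi^+$, one gets $\partial_Xh=2\la A(X)\cdot\Psi^+,\Psi^-\ra$. By \eqref{eta} and the symmetry of $A$ the right-hand side equals $2g(\eta,A(X))=2g(A(\eta),X)$, which is exactly $dh=2A(\eta)$ after identifying $1$-forms with vectors.

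For (ii), the key step is to differentiate the defining relation $g(\eta,Y)=\la Y\cdot\Psi^+,\Psi^-\ra$ for a fixed vector field $Y$. Since $\nabla$ is metric and Clifford multiplication is parallel, the terms involving $\nabla_XY$ match on both sides (again by \eqref{eta}), leaving
\[
g(\nabla_X\eta,Y)=\la Y\cdot\nabla_X\Psi^+,\Psi^-\ra+\la Y\cdot\Psi^+,\nabla_X\Psi^-\ra .
\]
Substituting $\nabla_X\Psi^+=A(X)\cdot\Psi^-$ and $\nabla_X\Psi^-=A(X)\cdot\Psi^+$ produces two Clifford expressions. The first, $\la Y\cdot A(X)\cdot\Psi^-,\Psi^-\ra$, is evaluated directly by \eqref{xy} to give $-h\,g(A(X),Y)$. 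The second, $\la Y\cdot\Psi^+,A(X)\cdot\Psi^+\ra$, must first be rewritten using the skew-symmetry of Clifford multiplication before \eqref{xy} applies, yielding $(1-h)\,g(A(X),Y)$. Adding the two gives $g(\nabla_X\eta,Y)=(1-2h)\,g(A(X),Y)$, i.e. $\nabla_X\eta=(1-2h)A(X)$.

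Finally (iii) is a formal consequence of (ii): since $A$ is symmetric, the endomorphism $X\mapsto\nabla_X\eta=(1-2h)A(X)$ is symmetric, so its antisymmetric part $d\eta$ vanishes, while its trace gives $\delta\eta=-\sum_ig(\nabla_{e_i}\eta,e_i)=-(1-2h)\tr A=-(1-2h)a$. I expect the only delicate point to be the sign bookkeeping in (ii): the $\Psi^-$--contribution and the $\Psi^+$--contribution come with opposite signs and with the factors $h$ and $1-h$ respectively, and it is their combination into $1-2h$ that has to be handled carefully, in particular the reordering via skew-symmetry needed before \eqref{xy} can be applied to the second term.
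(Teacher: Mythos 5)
Your proof is correct and follows essentially the same route as the paper: differentiate $h=|\Psi^-|^2$ and the defining identity of $\eta$ using \eqref{pm}, evaluate the two resulting Clifford terms via \eqref{xy} (with the sign flip from skew-symmetry on the $\Psi^+$ term), and deduce (iii) from the symmetry of $(1-2h)A$. The sign and factor bookkeeping ($-h$ versus $1-h$ combining into $1-2h$) matches the paper's computation exactly.
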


\proof
{\it (i)} Using \eqref{pm} we compute for every $X\in\T M$:
$$
d |\Psi^-|^2(X) =
2\la \nabla_X \Psi^-, \Psi^-\ra \;=\; 2\la  A(X) \cdot \Psi^+, \Psi^- \ra \;=\; 2\eta(A(X))=2g(A(\eta),X).
$$

{\it (ii)}
Taking the covariant derivative in the direction of $Y$ in \eqref{eta}, assuming that $X$ is
parallel at some point and using \eqref{xy} and \eqref{pm} yields
\bea
g( \nabla_Y \eta, X) & =& \la X \cdot A(Y) \cdot \Psi^-, \Psi^- \ra   \;+\; \la X \cdot \Psi^+, A(Y) \cdot \Psi^+ \ra  \\
& = &- g( X, A(Y)) \, |\Psi^-|^2 \;+\; g (X, A(Y)) \,  |\Psi^+|^2   \\
& = & (1-2h) g( A(Y), X) .
\eea
{\it (iii)} Follows immediately from {\it (ii)}.
\qed

\bigskip

\begin{ecor}\label{Delta}
$ \quad \Delta h \;=\; - \, 2\,  da(\eta) \;-\; 2 \, (a^2 - \lambda) \,(1 - 2h) $
\end{ecor}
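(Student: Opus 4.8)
The plan is to compute $\Delta h$ directly by applying the codifferential to $dh$, using the convention $\Delta = \delta d$ on functions. By part \emph{(i)} of Lemma \ref{dh} we have $dh = 2\,A(\eta)$, so the whole problem reduces to evaluating the divergence $\delta(A\eta)$ of the vector field $A(\eta)$, and then multiplying by $2$. First I would expand $\delta(A\eta) = -\sum_i g(\nabla_{e_i}(A\eta),e_i)$ in a local orthonormal frame and use the Leibniz rule to split it into a term involving $\nabla A$ and a term involving $\nabla\eta$.

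For the first term, $-\sum_i g((\nabla_{e_i}A)\eta, e_i)$, I would exploit the fact that each $\nabla_{e_i}A$ is symmetric (since $A$ is) to rewrite it as $-\sum_i g(\eta,(\nabla_{e_i}A)e_i) = g(\eta,\delta A)$, using the definition $\delta A = -\sum_i (\nabla_{e_i}A)e_i$. The relation \eqref{three1}, namely $\delta A = -da$, then turns this contribution into $-da(\eta)$. For the second term, $-\sum_i g(A(\nabla_{e_i}\eta),e_i)$, I would insert part \emph{(ii)} of Lemma \ref{dh}, which gives $\nabla_{e_i}\eta = (1-2h)\,A(e_i)$; this produces $-(1-2h)\sum_i g(A^2 e_i,e_i) = -(1-2h)\,\tr(A^2)$. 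Combining the two contributions yields
$$
\Delta h = 2\,\delta(A\eta) = -\,2\,da(\eta) \;-\; 2\,(1-2h)\,\tr(A^2).
$$

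Finally I would eliminate $\tr(A^2)$ using \eqref{three2}, which reads $\scal = 4a^2 - 4\tr(A^2)$. In the Einstein dimension-$4$ setting we have $\lambda = \tfrac{\scal}{4}$, so this gives $\tr(A^2) = a^2 - \lambda$. Substituting, the expression becomes $\Delta h = -2\,da(\eta) - 2(a^2-\lambda)(1-2h)$, which is exactly the claimed formula. The computation is essentially routine, so there is no deep obstacle; the only point demanding care is the sign bookkeeping, in particular the correct application of the sign conventions for $\delta$ on $1$-forms and for $\Delta$ on functions, together with the proper use of the symmetry of $\nabla_{e_i}A$ to convert $(\nabla_{e_i}A)\eta$ into $(\nabla_{e_i}A)e_i$ paired against $\eta$ so that \eqref{three1} can be invoked.
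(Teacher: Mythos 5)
Your proof is correct and follows essentially the same route as the paper: both compute $\Delta h = \delta dh = 2\,\delta(A\eta)$, split the divergence via the Leibniz rule into the $(\nabla_{e_i}A)\eta$ term (handled by the symmetry of $\nabla_{e_i}A$ and \eqref{three1}) and the $A(\nabla_{e_i}\eta)$ term (handled by Lemma~\ref{dh}~(ii)), and then substitute $\tr(A^2)=a^2-\lambda$ from \eqref{three2}. No differences worth noting.
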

\proof
Straightforward calculation using \eqref{three1}:
\bea
\Delta h & =& \delta d h \;=\;2 \,  \delta (A (\eta)) \;=\; - 2\sum_{i=1}^n g( e_i, (\nabla_{e_i}
A) \eta + A(\nabla_{e_i} \eta)) \\
& =& 2 g( \delta A, \eta )\;-\; 2 \sum_{i=1}^n g( \nabla_{e_i} \eta, A(e_i)) \\[1ex]
& =& - 2 da( \eta) \;-\; 2 (1-2h) \tr (A^2)
\;=\;  - 2 da( \eta) \;-\; 2 (1-2h) (a^2 - \lambda) .
\eea
\qed

We denote by $M_1$ the set of points where $\Psi^+$ is non-vanishing. Like before, $M_1$ is dense,
so $M':=M_0\cap M_1$ is dense, too.

It is well known that $\Lambda^2_\pm M$ acts trivially (by Clifford multiplication) on $\Sigma^\pm
M$. Moreover, the map 
$
\omega^+ \mapsto \omega^+ \cdot \Psi^-
$ is a bijection from the space of self-dual $2$-forms
$\Lambda^2_+M'$ onto the orthogonal complement $(\Psi^-)^\perp $  in $ \Sigma^-M'$, and similarly the map 
$
\omega^- \mapsto \omega^- \cdot \Psi^+
$ is a bijection from the space of anti-self-dual $2$-forms
$\Lambda^2_-M'$ onto the orthogonal complement $(\Psi^+)^\perp $  in $ \Sigma^+M'$.
This has the following important consequence.

\begin{elem}\label{form}
If $\,\omega$ is a $2$-form and $X$ is a vector field on $M'$ such that $\; \omega\cdot \Psi = X
\cdot \Psi \;$ holds, then
$$
\omega \;=\; (X \wedge \xi)^+ \;-\; \tfrac{1}{| \xi |^2}\, (X \wedge \xi)^- .
$$
where $\sigma^\pm$ denotes the self-dual and  anti-self-dual part of a $2$-form $\sigma$. 
In particular it follows that $B(\xi)= 0$ and that $X$ is orthogonal to $\xi$.
\end{elem}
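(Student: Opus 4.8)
The plan is to split the identity $\omega\cdot\Psi=X\cdot\Psi$ into its two chiral components and feed the result into the two bijections recalled just before the statement. Write $\omega=\omega^++\omega^-$ with $\omega^\pm\in\Lambda^2_\pm M'$. Clifford multiplication by a $2$-form preserves the splitting $\Sigma=\Sigma^+\oplus\Sigma^-$ while Clifford multiplication by a vector interchanges the two summands; moreover $\omega^+$ annihilates $\Sigma^+$ and $\omega^-$ annihilates $\Sigma^-$. Hence the $\Sigma^-$- and $\Sigma^+$-parts of the hypothesis read
\[
\omega^+\cdot\Psi^-=X\cdot\Psi^+,\qquad \omega^-\cdot\Psi^+=X\cdot\Psi^-.
\]

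Next I would rewrite the two right-hand sides as actions of $2$-forms. Using $\Psi^+=\xi\cdot\Psi^-$ together with the Clifford relation $\xi\cdot\xi=-|\xi|^2$, one also has $\Psi^-=-\tfrac{1}{|\xi|^2}\,\xi\cdot\Psi^+$. Substituting and applying \eqref{2f} gives $X\cdot\Psi^+=(X\wedge\xi)\cdot\Psi^--g(X,\xi)\Psi^-$ and $X\cdot\Psi^-=-\tfrac{1}{|\xi|^2}\big[(X\wedge\xi)\cdot\Psi^+-g(X,\xi)\Psi^+\big]$. Because $\Lambda^2_+M'$ kills $\Sigma^+$ and $\Lambda^2_-M'$ kills $\Sigma^-$, only the self-dual part of $X\wedge\xi$ survives in the first line and only its anti-self-dual part in the second.

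The decisive step is to separate, in each resulting equation, the component along $\Psi^\mp$ from the component in the orthogonal complement $(\Psi^\mp)^\perp$. By the two bijections, both $\omega^+\cdot\Psi^-$ and $(X\wedge\xi)^+\cdot\Psi^-$ lie in $(\Psi^-)^\perp$, whereas $g(X,\xi)\Psi^-$ is collinear with $\Psi^-$; this forces $g(X,\xi)=0$, i.e. $X\perp\xi$, and, by injectivity of the bijection, $\omega^+=(X\wedge\xi)^+$. The same argument applied to the $\Sigma^+$-equation (whose $\Psi^+$-term now vanishes since $g(X,\xi)=0$) gives $\omega^-=-\tfrac{1}{|\xi|^2}(X\wedge\xi)^-$. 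Adding the two yields the claimed formula for $\omega$.

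Finally, $B(\xi)=0$ is a direct corollary: applying the conclusion $X\perp\xi$ to equation \eqref{two-2} with $\omega=T^Z$ and $X=2B(Z)$ gives $g(B(Z),\xi)=0$ for every vector field $Z$, and since $B$ is symmetric this reads $g(Z,B(\xi))=0$ for all $Z$, whence $B(\xi)=0$. I expect the only genuine subtlety to be the bookkeeping in the decisive step: one must track precisely which terms land in $(\Psi^\mp)^\perp$ and which are collinear with $\Psi^\mp$, since it is exactly this orthogonality that simultaneously produces $X\perp\xi$ and pins down $\omega$ through the bijections.
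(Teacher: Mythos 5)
Your argument is correct and follows essentially the same route as the paper: decompose into chiral components, rewrite $X\cdot\Psi^\pm$ via $\Psi^+=\xi\cdot\Psi^-$ and $\Psi^-=-\tfrac{1}{|\xi|^2}\xi\cdot\Psi^+$, use the stated bijections and the orthogonality of $\sigma^\pm\cdot\Psi^\mp$ to $\Psi^\mp$ to extract both $g(X,\xi)=0$ and the two components of $\omega$, and then specialize to \eqref{two-2} for $B(\xi)=0$. No gaps.
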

\proof
Decomposing the generalized Killing spinor as $\Psi = \Psi^+ + \Psi^-$ and the $2$-form as
$\omega = \omega^+ + \omega^-$, the equation $X \cdot \Psi = \omega \cdot \Psi$ can be
rewritten as
\bea
\omega^- \cdot \Psi^++ \omega^+ \cdot \Psi^-  &= &X \cdot \Psi  \;=\; X \cdot (\Psi^+ + \Psi^-)
\;=\; X \cdot \xi \cdot \Psi^- \;-\; \tfrac{1\,}{| \xi |^2} \, X \cdot \xi \cdot \Psi^+\\
&=& (X \wedge \xi)^+ \cdot \Psi^- \;-\; g(X, \xi)\, \Psi^- \;-\; 
\tfrac{1\,}{| \xi |^2} \, (X\wedge \xi )^- \cdot \Psi^+\\
&&+\tfrac{1\,}{| \xi |^2} \, g(X, \xi)\,  \Psi^+ .
\eea
Comparing types, we find $\, \omega^+ = (X \wedge \xi)^+$ and 
$\, \omega^- = - \tfrac{1\,}{| \xi |^2} \, (X\wedge \xi )^- $. Moreover, since $\sigma^+ \cdot
\Psi^-$ is orthogonal to $\Psi^-$  for any $2$-form $\sigma$, the equation immediately implies $g(
X, \xi ) = 0$.
 Finally applying this result to Equation~\eqref{two-2} we obtain that
$g(B(Z), \xi)=0$ for any vector field $Z$, thus $B(\xi) = 0$.

\qed

\bigskip

Lemma~\ref{form} applied to Equations  \eqref{curv-2} and \eqref{two-2} allows us to express the
full curvature tensor of $(M, g) $ in terms of the endomorphism $A$. Indeed we immediately obtain
\beq\label{curv-3}
\tfrac12 \, g( \mathcal R(\sigma ), \tau )  \;+\; 2\, g( A(\sigma) , \tau )
\;=\; g(T(\sigma) \wedge \xi, \tau^+) \;\;-\;\; \tfrac{1\,}{ |\xi|^2} \,  g( T(\sigma) \wedge \xi,
\tau^-) ,
\eeq
for any $2$-forms $\sigma$ and $\tau$. Here  $\tau^+$ and $\tau^-$ are  self- and  anti-self dual 
part of $\tau$. The $T$-part needs a short calculation and is given in the following

\begin{elem}\label{T}
Let $\sigma$ and $\tau$ be any $2$-forms, then
$$
g( T(\sigma) \wedge \xi, \tau ) \;=\; 
2\, g(  B(\sigma^+(\xi)) , \tau (\xi))  \;-\; \tfrac{2\,}{ | \xi |^2} \, g(  B(\sigma^-(\xi)), \tau (\xi)).
$$
\end{elem}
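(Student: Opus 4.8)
The plan is to feed the spinorial identity \eqref{two-2} into Lemma~\ref{form} in order to obtain a purely algebraic expression for each $2$-form $T^Z$, and then to unwind the pairing $g(T(\sigma)\wedge\xi,\tau)$ into a pairing of $\sigma$ against $T^{\tau(\xi)}$. First I would apply Lemma~\ref{form} to \eqref{two-2}: since $T^Z\cdot\Psi = 2\,B(Z)\cdot\Psi$, the lemma (with $\omega = T^Z$ and $X = 2B(Z)$) yields on $M'$ the explicit formula
\beq\label{tz}
T^Z \;=\; 2\,(B(Z)\wedge\xi)^+ \;-\; \tfrac{2}{|\xi|^2}\,(B(Z)\wedge\xi)^-,
\eeq
expressing $T^Z$ algebraically through $B(Z)$ and $\xi$ alone (and recovering en passant that $B(\xi)=0$).

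Next I would reduce the left-hand side. Identifying $2$-forms with skew-symmetric endomorphisms and normalizing the induced inner product on $\Lambda^2$ so that $\la X\wedge Y,\alpha\ra = \alpha(X,Y)$, the defining relation $g(T(X\wedge Y),Z)=T^Z(X,Y)$ gives $g(T(\sigma),Z)=\la\sigma,T^Z\ra$ for every $2$-form $\sigma$. On the other hand, writing $V:=T(\sigma)$ and using the skew-symmetry of $\tau$, one has $g(V\wedge\xi,\tau)=\tau(V,\xi)=-\,g(V,\tau(\xi))$. Combining the two,
\beq\label{reduce}
g(T(\sigma)\wedge\xi,\tau) \;=\; -\,g(T(\sigma),\tau(\xi)) \;=\; -\,\la\sigma,\,T^{\tau(\xi)}\ra .
\eeq

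Finally I would substitute \eqref{tz} with $Z=\tau(\xi)$ into \eqref{reduce} and expand. Decomposing $\sigma=\sigma^++\sigma^-$ and using the orthogonality of self-dual and anti-self-dual $2$-forms, only $\sigma^+$ pairs with the self-dual term of \eqref{tz} and only $\sigma^-$ with the anti-self-dual term; moreover $\la\sigma^\pm,\,B(\tau(\xi))\wedge\xi\ra=-\,g(B(\tau(\xi)),\sigma^\pm(\xi))$. Transferring $B$ onto $\sigma^\pm(\xi)$ by its symmetry, and tracking the sign from skew-symmetry against the overall minus in \eqref{reduce}, the claimed identity drops out, with the constants $2$ and $-2/|\xi|^2$ inherited directly from \eqref{tz}.

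The computation is entirely pointwise and algebraic on $M'$; no differential information beyond \eqref{two-2} is required. The only real obstacle is the bookkeeping of three mutually compatible conventions --- the metric identification of $2$-forms with skew endomorphisms, the chosen normalization of the inner product on $\Lambda^2$, and the definition of $T:\Lambda^2M\to\T M$ through the family $T^Z$ --- together with the signs produced by skew-symmetry. Once these are pinned down consistently, the self-dual/anti-self-dual orthogonality does the rest.
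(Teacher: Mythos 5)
Your proposal is correct and follows essentially the same route as the paper: both derive the explicit formula $T^Z = 2(B(Z)\wedge\xi)^+ - \tfrac{2}{|\xi|^2}(B(Z)\wedge\xi)^-$ from Lemma~\ref{form} applied to \eqref{two-2}, and then unwind the pairing using self-dual/anti-self-dual orthogonality. The only cosmetic difference is that the paper first computes the vector $T(\sigma)=-2B(\sigma^+(\xi))+\tfrac{2}{|\xi|^2}B(\sigma^-(\xi))$ and pairs it with $\tau(\xi)$, whereas you pair $\sigma$ against $T^{\tau(\xi)}$ and transfer $B$ by symmetry --- a transposed version of the identical computation.
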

\proof
Lemma~\ref{form} together with Equation~\eqref{two-2} imply for any vector field $Z$ the equation
$$
T^Z \;=\; 2\, (B(Z) \wedge \xi)^+ \;-\; \tfrac{2\,}{|\xi|^2} \, (B(Z) \wedge \xi)^- .
$$
Then $T(X  \wedge Y) = \sum_{i=1}^n T^{e_i}(X, Y)\, e_i = \sum_{i=1}^n g(   T^{e_i}, X \wedge Y)
e_i$. Thus replacing $X\wedge Y$ with any $2$-form $\sigma$ gives
\begin{align*}
T(\sigma ) & \;=\quad  2  \sum_{i=1}^n g( B(e_i) \wedge \xi, \sigma^+)\, e_i \;-\; \sum_{i=1}^n
\tfrac{2\,}{|\xi|^2} \,  g( B(e_i) \wedge \xi, \sigma^-)\,e_i \\
& \;= \; -  2 B(\sigma^+(\xi)) \;+\;  \tfrac{2\,}{|\xi|^2} \, B(\sigma^-(\xi)) .
\end{align*}
Taking the scalar product with $\tau(\xi) = \xi \lrcorner \,\tau$ proves the statement of the lemma.
\qed

\medskip

For later use we still need an expression in the special case $\sigma = \eta \wedge Y$, where $Y$ is an arbitrary  
vector field and $\eta$ is defined in \eqref{eta}.

\begin{ecor}\label{corT}
Let $Y$ be any vector field then
$$
T(\eta, Y) \;=\; (1-2h) \left(A^2(Y) -a A(Y) + \tfrac{\lambda}{4} \, Y \right) .
$$
\end{ecor}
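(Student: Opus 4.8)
The plan is to specialize the computation already carried out inside the proof of Lemma~\ref{T}. There it is shown that for \emph{any} $2$-form $\sigma$ one has
$$
T(\sigma) \;=\; -2\,B(\sigma^+(\xi)) \;+\; \tfrac{2}{|\xi|^2}\,B(\sigma^-(\xi)),
$$
so it suffices to insert $\sigma = \eta\wedge Y$ and simplify. First I would record that, by \eqref{eta} together with the relation $\eta(X)=-h\,g(X,\xi)$, the vector field $\eta$ equals $-h\,\xi$; hence $\sigma = \eta\wedge Y = -h\,\xi\wedge Y$ is a scalar multiple of $\xi\wedge Y$.

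The decisive observation is that the self-dual and anti-self-dual parts of such a $\sigma$ contract with $\xi$ in exactly the same way. Indeed, from $\sigma^\pm = \tfrac12(\sigma\pm *\sigma)$ one gets $\sigma^+(\xi)-\sigma^-(\xi) = \xi\lrcorner *\sigma$, and because $\sigma$ is proportional to $\xi\wedge Y$ this correction vanishes: in an orthonormal basis with $\xi\in\RM\,e_1$ and $Y\in\mathrm{span}(e_1,e_2)$ we have $\xi\wedge Y\in\RM\,e_1\wedge e_2$, whose Hodge dual lies in $\RM\,e_3\wedge e_4$ and is therefore annihilated by $\xi\lrcorner$. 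Thus $\sigma^+(\xi)=\sigma^-(\xi)=\tfrac12\,\xi\lrcorner\sigma$, and the formula above collapses to $T(\sigma)=\big(\tfrac{1}{|\xi|^2}-1\big)\,B(\xi\lrcorner\sigma)$.

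It then remains to evaluate $\xi\lrcorner\sigma$ and the scalar factor. A direct contraction gives $\xi\lrcorner(\eta\wedge Y)=g(\xi,\eta)Y-g(\xi,Y)\eta$; using $\eta=-h\xi$ and $|\xi|^2=\tfrac1h-1$ yields $g(\xi,\eta)=-h|\xi|^2=h-1$, so $\xi\lrcorner\sigma=(h-1)Y+h\,g(\xi,Y)\xi$. Applying $B$ and invoking $B(\xi)=0$ from Lemma~\ref{form} kills the $\xi$-term, leaving $B(\xi\lrcorner\sigma)=(h-1)B(Y)$. Finally $\tfrac{1}{|\xi|^2}-1=\tfrac{2h-1}{1-h}$, and the product $\tfrac{2h-1}{1-h}\,(h-1)=-(2h-1)=1-2h$ gives $T(\eta\wedge Y)=(1-2h)B(Y)$; substituting $B(Y)=A^2(Y)-aA(Y)+\tfrac14\Ric(Y)$ and the Einstein condition $\Ric=\lambda g$ produces exactly the claimed identity. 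The only genuinely subtle point is the vanishing of $\xi\lrcorner *\sigma$: this is precisely what forces the anti-self-dual contribution to coincide with the self-dual one and collapses the two coefficients into the single factor $1-2h$. Everything else is bookkeeping.
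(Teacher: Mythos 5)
Your proposal is correct and follows essentially the same route as the paper: both start from the identity $T(\sigma)=-2B(\sigma^+(\xi))+\tfrac{2}{|\xi|^2}B(\sigma^-(\xi))$ established in the proof of Lemma~\ref{T}, and both hinge on the observation that for $\sigma=\eta\wedge Y$ (with $\eta=-h\xi$) the contribution of $\ast\sigma$ contracted with $\xi$ vanishes, which collapses the two terms into the single factor $1-2h$ after using $B(\xi)=0$ and $|\xi|^2=\tfrac{1-h}{h}$. The remaining bookkeeping in your argument matches the paper's computation exactly.
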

\proof
In dimension $4$ we have $\ast (X\wedge Y) = - X \lrcorner \ast Y$ and 
$\ast (X \wedge Y)(\xi) =-  g( \ast Y, X \wedge \xi ) $. Hence,
taking $\sigma = X \wedge Y$, we deduce from the calculations in the proof of Lemma~\ref{T} that
$$
T(X \wedge Y ) \;=\; ( \tfrac{1\,}{|\xi|^2} - 1)  B((X \wedge Y)(\xi))
\;-\;
(1 +  \tfrac{1\,}{|\xi|^2}) B( \ast (X \wedge Y)(\xi)) .
$$
Thus specializing to $X \wedge Y = \eta \wedge Y$ the second summand vanishes. Recalling that
$\eta = - h \xi$, $|\xi|^2 = \tfrac{1-h}{h} $ and $B(\xi)=0$,  we get
$$
T(\eta, Y) \;=\; 
\tfrac{1 - |\xi|^2}{ | \xi |^2} g( \eta , \xi )B(Y) 
\;=\; (1-2h)B(Y) \;=\; (1-2h)(A^2(Y) - a A(Y)+ \tfrac{\lambda}{4} \, Y  ) .
$$
\qed

\medskip

In dimension $4$ the Einstein condition is equivalent to having
$\mathcal R : \Lambda^2_\pm T \rightarrow \Lambda^2_\pm T$, i.e. the curvature operator preserves
the space of self-dual and
anti-self-dual forms. In particular we have $g( \mathcal R (\sigma^+), \tau^-) = 0$.  Let $e_1:=
\tfrac{\xi }{ \|  \xi \|}$.
Substituting
$\sigma = \sigma^+$ and $\tau = \tau^-$ in  \eqref{curv-3} and using Lemma~\ref{T} yields
\beq\label{AB}
0 \;=\; g( A(\sigma^+), \, \tau^-  ) \;+\; g(  B(\sigma^+(e_1)), \, \tau^-(e_1) ).
\eeq

\medskip

We will use \eqref{AB}
to show that $A(e_1) = a_1 e_1$ for some real function $a_1$. Indeed the condition $B(e_1) = 0$ implies 
$
A^2(e_1) - a A(e_1) + \tfrac{\lambda}{4}e_1 = 0 .
$
Thus the space $\mathrm{span} \{e_1, A(e_1)\}$ is invariant under $A$ and we may choose a local orthonormal
frame $\{e_1, e_2,e_3, e_4\}$, with $e_1:= \tfrac{\xi\, }{ \|  \xi \|^2}$ and
$$
A e_1 = a_1 e_1 + a_{12} e_2, \quad A e_2 = a_{12} e_1 + a_2 e_2,\quad A e_3 = a_3 e_3, \quad A e_4
= a_4 e_4 .
$$
Consider the $2$-forms $ \sigma^+ = e_1 \wedge e_3 - e_2 \wedge e_4$ and $\tau^- = e_1 \wedge e_4 -
e_2 \wedge e_3$. Then
\beq\label{ev}
0 \;=\; g( A(\sigma^+), \, \tau^- ) \;=\; - a_{12} \, (a_4 \,+\, a_3 ) .
\eeq
Next consider the $2$-forms $\sigma^+ = e_1 \wedge e_4 + e_2\wedge e_3$ and $\tau^- e_1 \wedge e_4
- e_2 \wedge e_3$. Then
\bea
0 &  =&  g( A(\sigma^+), \, \tau^- )  \;+\; g(  B(e_4), e_4 )\;=\; (a_1a_4 \,-\,a_2a_3)\;  +\;
(a_4^2 \,-\,a a_4 \,+\, \tfrac{\lambda}{4} ) \\
&  = & -a_2(a_3 \,+\,a_4) \;-\; a_3a_4 \;+\; \tfrac{\lambda}{4}
\eea
If $a_3 + a_4 = 0$, then $a_3 a_4 = \tfrac{\lambda}{4}>0$, which is impossible. Thus $a_3 + a_4
\neq 0$ and $a_{12}$ has to vanish, because of \eqref{ev}. Consequently, around every point in $M'$
we have a local orthonormal frame $e_1 := \tfrac{\xi }{ \|  \xi \|} , e_2, e_3, e_4$ with 
$Ae_i = a_i e_i$ and such that the eigenvalues $a_i$ satisfy the relation 
$
a_2a_3 + a_2 a_4 + a_3a_4 = \tfrac{\lambda}{4}
$.
Moreover $a_1^2 - aa_1 + \tfrac{\lambda}{4} = 0$ and in particular, since $\lambda >0$, the function $a_1$
is nowhere zero. 

\medskip

Using Lemma~\ref{dh} (i) and (iii) we get $0 = d(A(\eta)=d(a_1\eta)=da_1 \wedge \eta$ and thus $d
a_1$ is collinear to $\eta$. The precise relation is given in the following

\begin{epr}\label{da}
$
\quad d a_1 \;=\; \tfrac{1 - 2h}{h(1-h)} (\tfrac{\lambda}{4} - 3a_1^2) \, \eta
$
\end{epr}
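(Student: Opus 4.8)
The plan is to exploit the fact, already established, that $da_1$ is collinear to $\eta$: writing $da_1 = f\,\eta$ it suffices to compute the single scalar $da_1(\eta)$ and then divide by $|\eta|^2 = h(1-h)$ (from \eqref{et}) to recover $f$. I would obtain $da_1(\eta)$ by setting up two scalar relations and eliminating the auxiliary quantity $da(\eta)$.

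First I would derive an explicit formula for $\nabla_\eta A$. Corollary~\ref{corT}, read as $(\nabla_\eta A)Y - (\nabla_Y A)\eta = (1-2h)\,B(Y)$, reduces the task to computing $(\nabla_Y A)\eta$. Since $\eta = -h\xi$ is collinear to $e_1 = \xi/\|\xi\|$ and $Ae_1 = a_1 e_1$, one has $A(\eta) = a_1\eta$; combining this with $\nabla_Y\eta = (1-2h)A(Y)$ from Lemma~\ref{dh}\,(ii) gives
$$
(\nabla_Y A)\eta = da_1(Y)\,\eta + a_1(1-2h)A(Y) - (1-2h)A^2(Y).
$$
Substituting $B(Y) = A^2(Y) - aA(Y) + \tfrac{\lambda}{4}Y$, the $A^2$-terms cancel and I am left with the clean identity
$$
(\nabla_\eta A)Y = da_1(Y)\,\eta + (1-2h)(a_1 - a)A(Y) + (1-2h)\tfrac{\lambda}{4}\,Y.
$$

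From this I would extract two scalar equations. Taking the trace over $Y$ and using $\tr(\nabla_\eta A) = da(\eta)$ yields $da(\eta) = da_1(\eta) + (1-2h)\big[(a_1-a)a + \lambda\big]$. For the second relation I would differentiate the pointwise identity $a_1^2 - aa_1 + \tfrac{\lambda}{4} = 0$ (the scalar content of $B(e_1)=0$), obtaining $(a - 2a_1)\,da_1 = -a_1\,da$ and hence, contracting with $\eta$, $(a - 2a_1)\,da_1(\eta) = -a_1\,da(\eta)$. Eliminating $da(\eta)$ between the two relations and repeatedly using the eigenvalue constraint $a_1(a - a_1) = \tfrac{\lambda}{4}$ (so that $a - a_1 = \tfrac{\lambda}{4a_1}$, legitimate since $a_1$ is nowhere zero) collapses everything to $da_1(\eta) = (1-2h)(\tfrac{\lambda}{4} - 3a_1^2)$; dividing by $|\eta|^2 = h(1-h)$ gives the claim.

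The conceptual ingredients—Corollary~\ref{corT} and Lemma~\ref{dh}—are already in hand, so deriving the formula for $\nabla_\eta A$ is routine. I expect the only delicate point to be the algebraic elimination: keeping careful track of the coefficients in $a$, $a_1$ and $\lambda$ and applying the constraint $a_1(a-a_1)=\tfrac\lambda4$ at the right moments. One should also note that all divisions are justified on the dense set $M'$, where $a_1 \neq 0$ and $h \notin \{0,1\}$.
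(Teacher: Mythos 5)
Your proposal is correct and follows essentially the same route as the paper: both derive the identity $(\nabla_\eta A)Y = da_1(Y)\,\eta + (1-2h)\bigl[(a_1-a)A(Y)+\tfrac{\lambda}{4}Y\bigr]$ from Lemma~\ref{dh}\,(ii) and Corollary~\ref{corT}, then close the system using the differentiated constraint $a_1^2-aa_1+\tfrac{\lambda}{4}=0$. The only (harmless, and arguably slightly cleaner) difference is that you contract this identity with the identity endomorphism to get $\eta(a)$ directly, whereas the paper contracts it with $A$ and computes $\eta(|A|^2)=2a\,\eta(a)$ via \eqref{three2}; both eliminations yield $\eta(a_1)=(1-2h)(\tfrac{\lambda}{4}-3a_1^2)$, and your divisions by $a_1$ and $a-a_1=\tfrac{\lambda}{4a_1}$ are justified on $M'$ since $\lambda>0$.
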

\proof
Let $f$ be a function with $da_1 = f \eta$. Then
$
da_1(\eta) = f |\eta|^2 = f h (1-h)
$
and
$
f = \tfrac{\eta(a_1)}{h(1-h)}
$.
In order to compute $\eta(a_1)$, we take the covariant derivative of $A(\eta) = a_1 \eta$
in direction of the vector field $Y$. Using Lemma~\ref{dh} (ii) we get
\bea
(\nabla_Y A)\, \eta  & =&- \,  A(\nabla_Y\eta)\;+\; Y(a_1)\,  \eta \;+\; a_1(1-2h)\, A(Y) \\
& = & - (1-2h) A^2(Y)\;+\; Y(a_1)\, \eta \;+\; a_1(1-2h) A(Y) .
\eea
Next we apply \eqref{da1} and Corollary~\ref{corT} to interchange $Y$ and $\eta$.
We obtain
\bea
(\nabla_\eta A)\, Y & =& T(\eta, Y)\; -\;  (1-2h) A^2(Y)\;+\; Y(a_1)\, \eta \;+\; a_1(1-2h) A(Y) \\
& = &
Y(a_1) \;+\; (1-2h) \left( (a_1 -a ) A(Y) \,+\, \tfrac{\lambda}{4} \, Y \right) .
\eea
Since $|A|^2 = \tr (A^2)$ and $\scal $ is constant, \eqref{three2} implies
$
\eta(|A|^2) = \eta (\tr(A^2) ) = \eta (a^2) = 2a \eta(a)
$.
On the other hand, computing $\eta(|A|^2) $ with
$
\eta(|A|^2) = \nabla_\eta |A|^2 = 2 g( \nabla_\eta A, A) = 2 g( (\nabla_\eta A)e_i, A(e_i) )
$
gives
\bea
a \, \eta(a) & = &A(\eta) (a_1) \;+\; (1-2h) \left( (a_1 -a ) \tr(A^2) \,+\, \tfrac{\lambda}{4} \, a \right) \\
& = & a_1 \, \eta(a_1)  \;+\; (1-2h) \left( (a_1 -a ) (a^2 - \lambda ) \,+\, \tfrac{\lambda}{4} \, a \right).
\eea
From $B(\xi)=0$ we have $a_1^2 - a a_1 + \tfrac{\lambda}{4} = 0 $ and thus
$
a \, \eta(a) - a_1 \eta(a_1) = - \tfrac{(a_1-a)^2}{a_1} \eta(a_1)
$.
Another simple calculation gives
$
 (a_1 -a ) (a^2 - \lambda ) \,+\, \tfrac{\lambda}{4} \, a = (a_1-a)^2(4a_1-a)
$.
Substituting this into the equation above yields
$$
\eta (a_1) \;=\; (1-2h)(aa_1 - 4a_1^2) \;=\; (1-2h)(\tfrac{\lambda}{4} - 3a_1^2) .
$$
\qed

\medskip

Comparing the expression of $da_1$  given in Proposition~\ref{da} and the expression of $dh$ given
in Lemma~\ref{dh} (ii) we get
$$d a_1 \;=\; \tfrac{1 - 2h}{2h(1-h)} \left(\tfrac{\lambda}{4a_1} - 3a_1\right) dh.$$
This shows that the function 
\beq\label{c1}
\; C := (h(1 - h))^3 \, \left(a_1^2 - \tfrac{\lambda}{12}\right) \;
\eeq
is constant on $M'$. Note that although the function $a_1$ is only defined on $M'$, the function
$l:=h(1-h)a_1^2$ is well-defined on the whole $M$.
Indeed, from Lemma~\ref{dh} (i) and \eqref{et} we get $|dh|^2=4a_1^2h(1-h)=4l$. Using the density
of $M'$ in $M$, \eqref{c1} shows that
\beq\label{c2}
\; C := (h(1 - h))^2 \, \left(l - \tfrac{h(1-h)\lambda}{12}\right) \;
\eeq
is constant on $M$. Moreover this constant turns out to be zero because of

\begin{elem}
The function $h(1-h)$ has a zero and in particular the constant $C$ vanishes.
\end{elem}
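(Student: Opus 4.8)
The plan is to argue by contradiction, exploiting the fact that $h=|\Psi^-|^2$ takes values in $[0,1]$, so that $h(1-h)$ vanishes at a point precisely when $h$ equals $0$ or $1$ there. Suppose then that $h(1-h)$ were nowhere zero; since $0\le h\le 1$ this forces $0<h<1$ on all of $M$. In particular $M_0=\{h>0\}$ and $M_1=\{h<1\}$ both coincide with $M$, so $M'=M_0\cap M_1=M$, and the local eigenframe $\{e_1,\dots,e_4\}$ with $Ae_i=a_ie_i$, together with the eigenvalue function $a_1$, is available at every point. Recall that $a_1$ satisfies $a_1^2-aa_1+\tfrac\lambda4=0$, so since $\lambda>0$ it is nowhere zero.

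The key observation is that the differential of $h$ is collinear with $\eta$, with nonvanishing coefficient. Indeed $\eta=-h\xi$ is proportional to $\xi=\|\xi\|\,e_1$, which is an eigenvector of $A$ for the eigenvalue $a_1$; hence $A(\eta)=a_1\eta$, and Lemma~\ref{dh}\,(i) gives $dh=2A(\eta)=2a_1\eta$. Now let $p\in M$ be a point where the smooth function $h$ attains its maximum, which exists by compactness. There $dh(p)=0$, so $2a_1(p)\,\eta(p)=0$; since $a_1(p)\ne0$ we conclude $\eta(p)=0$, and therefore by \eqref{et} $h(p)(1-h(p))=|\eta(p)|^2=0$. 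This contradicts $0<h(p)<1$, so $h(1-h)$ must vanish somewhere on $M$.

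To deduce $C=0$, I would evaluate the globally defined expression \eqref{c2}, namely $C=(h(1-h))^2\big(l-\tfrac{h(1-h)\lambda}{12}\big)$, at a zero $p_0$ of $h(1-h)$. There the prefactor $(h(1-h))^2$ vanishes while $l$ (which equals $\tfrac14|dh|^2$ and is smooth on all of $M$) stays finite, whence $C(p_0)=0$; since $C$ is constant on $M$, it vanishes identically.

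The only genuinely delicate point is the initial reduction: one must ensure that at the maximum point the eigenvalue $a_1$ is defined and nonzero, i.e. that this point lies in $M'$. This is exactly what the contradiction hypothesis provides, as it makes $M'=M$; without it one would have to treat separately the case where the maximum of $h$ is attained where $\Psi^+$ or $\Psi^-$ vanishes, but in that case $h(1-h)$ is already zero and there is nothing to prove. Beyond this bookkeeping I expect no obstacle, the argument being in essence a single application of the maximum principle once the identity $dh=2a_1\eta$ is established.
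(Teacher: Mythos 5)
Your argument is correct and is essentially the paper's: the paper evaluates at the \emph{minimum} of $h$ (showing directly that $h(x_0)=0$ there), while you argue by contradiction at the maximum, but both rest on the same chain $dh=2A(\eta)=2a_1\eta$, $a_1\neq 0$ on $M'$, and $|\eta|^2=h(1-h)$ at a critical point. The deduction $C=0$ from the globally defined form \eqref{c2} is likewise the intended one.
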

\proof
Since $M$ is compact $h$ attains its absolute minimum at some point $x_0 \in M$. 
By \eqref{et} $h$ takes values in $[0,1]$. Clearly $h(x_0)<1$, since otherwise $h\equiv 1$ on $M$,
i.e. $\Psi^-\equiv 0$, which is impossible.

Assume that $h(x_0)\ne 0$.
Then $x_0\in M'$ so Lemma~\ref{dh} (i) together with $dh_{x_0} = 0$ give $2a_1(x_0)\eta_{x_0} = 0$.
 As $a_1$ is nowhere zero on $M'$, it
follows $\eta_{x_0}=0$ and thus $ 0 = |\eta|^2(x_0) = h(1-h)(x_0)$, i.e. $h(x_0)=0$. This
contradiction shows that actually $h$ vanishes at $x_0$.
\qed

\bigskip
We can now conclude:

\begin{ath}\label{dim4}
Let $(M, g)$ be a compact $4$-dimensional Einstein manifold of positive scalar curvature, admitting
a generalized Killing spinor $\Psi$. Then $(M, g)$ is
isometric to the standard sphere and $\Psi$ is an ordinary Killing spinor.
\end{ath}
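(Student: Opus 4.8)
The plan is to combine the vanishing of the constant $C$ with the eigenvalue estimate of Lichnerowicz and its rigidity case (Obata's theorem). First I would observe that, since $C=0$ and $M'$ is dense, relation \eqref{c2} forces $l=\tfrac{h(1-h)\lambda}{12}$ on $M'$, that is $a_1^2=\tfrac{\lambda}{12}$ there. In particular $a_1$ is locally constant on $M'$, and from $a_1^2-a a_1+\tfrac\lambda4=0$ (the relation $B(\xi)=0$) the trace satisfies $a=\tfrac{\lambda}{3a_1}$, so $a$ takes one of the two values $\pm 2\sqrt{\lambda/3}$ at every point of $M'$. As $a$ is continuous on the connected manifold $M$ and $M'$ is dense, $a$ must be globally constant; consequently $a_1=\tfrac{\lambda}{3a}$ is a nonzero constant with $a_1^2=\tfrac\lambda{12}$.

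Next I would feed $da=0$ into Corollary~\ref{Delta}. Since $a^2-\lambda=\tfrac{4\lambda}{3}-\lambda=\tfrac\lambda3$, it yields $\Delta h=-\tfrac{2\lambda}{3}(1-2h)$ on all of $M$, so that $f:=h-\tfrac12$ satisfies $\Delta f=\tfrac{4\lambda}{3}f$. The function $f$ is nonconstant, because the last lemma shows that $h$ attains the value $0$ while $h\not\equiv 0$ (the set where $\Psi^-\neq 0$ is dense). Now $\Ric=\lambda g$, so the Lichnerowicz lower bound for the first nonzero eigenvalue of $\Delta$ equals exactly $\tfrac{4}{3}\lambda$, which is precisely the eigenvalue of $f$. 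By the rigidity (equality) case of the Lichnerowicz--Obata theorem, $(M,g)$ is isometric to the round sphere and, moreover, $f$ satisfies the Obata equation $\nabla df=-\tfrac{\lambda}{3}\,f\,g$.

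Finally I would identify $A$. On $M'$ we have $dh=2A(\eta)=2a_1\eta$, since $\eta=-h\xi$ is collinear to the eigenvector $e_1$ of $A$; differentiating and using Lemma~\ref{dh}~(ii) together with the constancy of $a_1$ gives $\nabla dh(X,Y)=2a_1(1-2h)\,g(A(X),Y)=-4a_1 f\,g(A(X),Y)$. Comparing this with the Obata equation on the dense open set $\{f\neq 0\}\cap M'$ forces $g(A(X),Y)=\tfrac{\lambda}{12a_1}g(X,Y)$, and since $\tfrac{\lambda}{12a_1}=a_1$ we obtain $A=a_1\,\id$ there, hence on all of $M$ by continuity. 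Thus \eqref{gks} reduces to $\nabla_X\Psi=a_1\,X\cdot\Psi$, i.e.\ $\Psi$ is an ordinary Killing spinor. I expect the main obstacle to be recognizing that the computation naturally produces an eigenfunction \emph{saturating} the Lichnerowicz bound, and then invoking the equality case of Obata's theorem to extract simultaneously the round sphere and the Hessian identity that pins down $A$; the repeated density arguments needed to transport facts proved on $M'$ to all of $M$ also require some care.
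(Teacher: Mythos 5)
Your argument is correct and follows the paper's strategy for its first two thirds: you extract $a_1^2=\tfrac{\lambda}{12}$ and the constancy of $a$ from $C=0$, then recognize $1-2h$ (equivalently $f=h-\tfrac12$) as a nonconstant eigenfunction of $\Delta$ with eigenvalue $\tfrac43\lambda$ saturating the Lichnerowicz bound, and invoke Obata to identify $(M,g)$ with the round sphere. Where you diverge is the final identification of $A$: the paper finishes with a one-line algebraic observation, namely that \eqref{three2} together with $a^2=\tfrac43\lambda$ gives $\tr\bigl(A-\tfrac a4\id\bigr)^2=\tr(A^2)-\tfrac{a^2}4=(a^2-\lambda)-\tfrac{a^2}4=0$, which forces the symmetric tensor $A$ to equal $\tfrac a4\id$ pointwise with no further analysis. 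You instead exploit the Hessian identity $\nabla df=-\tfrac{\lambda}{3}f\,g$ from the equality case of Lichnerowicz--Obata and compare it with $\nabla dh=2a_1(1-2h)\,g(A\cdot,\cdot)$ coming from Lemma~\ref{dh}~(ii). This works, but it costs you an extra density argument: you must justify that $\{f\neq 0\}\cap M'$ is dense (true, since a nonconstant eigenfunction has nowhere dense zero set, e.g.\ by unique continuation, or because on the round sphere $f$ is the restriction of a linear function), and you should be slightly careful that the constant in the Obata equation matches $\tfrac{\lambda_1}{n}=\tfrac{\lambda}{3}$, which it does. The paper's trace identity is more economical and entirely pointwise; your route has the mild virtue of re-deriving $A=a_1\id$ directly from the geometry of $h$ and $\eta$, but it is strictly more work for the same conclusion.
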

\proof
Since $C=0$ and $h$ is non-constant, \eqref{c2} gives $a_1^2 = \tfrac{\lambda}{12}$, so $a^2 =
\tfrac43\lambda$. In particular, the function $a$
is constant on $M'$, and thus on $M$. From Corollary \ref{Delta} it follows that $1-2h$ is a an
eigenfunction for the
Laplace operator for the eigenvalue $4(a^2-\lambda) = \tfrac{4}{3} \lambda$. According to the
Lichnerowicz-Obata Theorem,
this is the lowest possible eigenvalue of the Laplace operator on compact Einstein manifolds, and
it characterizes the round sphere.

Moreover,  from Equation \eqref{three2} we obtain 
$$\tr\left(A-\tfrac a4\id\right)^2=\tr(A^2)-\tfrac{a^2}4=(a^2-\lambda)-\tfrac{a^2}4=0.
$$
This shows that the symmetric endomorphism $A$ is a constant multiple of the identity map, so
$\Psi$ is a Killing spinor and thus the manifold is isometric to $\SM^4$, cf. \cite{ba,bfgk,hi86}.
\qed

\medskip

\subsection{The case of dimension $5$}

We start by reviewing the algebraic theory of spinors in dimension 5. Let $M$ be a 5-dimensional
spin manifold. Since the spin representation is isomorphic to the standard representation of
$\Spin(5)=\Sp(2)$ on $\RM^8=\HM^2$, the spin bundle $\Sigma M$ carries a quaternionic structure.
However, as the Clifford algebra $\Cl_5$ is isomorphic with $\CM(4)$, only one of the complex
structures on $\Sigma M$  (the one given by Clifford multiplication with the volume element)
commutes with the Clifford product with vectors. Let us call this complex structure $I$ and denote
by $J$ and $K$ two other complex structures on $\Sigma M$ orthogonal to $I$ and anti-commuting. It
is easy to check that $J$ and $K$ anti-commute with the Clifford product with vectors. 

For every nowhere vanishing spinor $\Psi$, the spin bundle has the following orthogonal direct sum decomposition:
\beq\label{deco} \Sigma M=\T M\.\Psi\oplus \la\Psi\ra\oplus \la J\Psi\ra\oplus  \la K\Psi\ra.
\eeq
Indeed, it is straightforward to check from the above properties of the complex structures $J$ and
$K$ that all factors are mutually orthogonal.
Since $I\Psi$ is orthogonal to the last three factors, we must have $I\Psi\in \T M\.\Psi$, so there
exists a unit vector field $\xi$ such that 
\beq\label{xi}I\Psi=\xi\.\Psi.
\eeq
We denote by $D:=\xi^\perp$ the distribution orthogonal to $\xi$. For every $X\in D$ the spinor
$X\.I\Psi$ is orthogonal to $\Psi$, $I\Psi$,  $J\Psi$ and  $K\Psi$, so there exists a vector $Y_X\in
D$ such that $X\.I\Psi=Y_X\.\Psi$. We denote by $L$ the endomorphism of $\T M$ which maps $\xi$ to
$0$ and $X$ to $Y_X$ for $X\in D$. It is easy to check that $L$ is skew-symmetric and satisfies the
relations 
\beq\label{t1} L^2=-\id+\xi\otimes\xi,
\eeq
(so $L$ defines a complex structure on $D$), and
\beq\label{t2} X\.I\Psi=LX\.\Psi-g(X,\xi)\Psi,\qquad\forall X\in \T M.
\eeq
This last relation allows us to explicit the Clifford product of 2-forms with $\Psi$. We decompose
$\Lambda^2M$ as
$$\Lambda^2M=\xi\wedge \T M\oplus \Lambda^{(1,1)}D\oplus \Lambda^{(2,0)+(0,2)}D$$
and using \eqref{t2} we get
\beq\label{cl1} (\xi \wedge X)\.\Psi=-LX\.\Psi,\qquad\forall X\in \T M,
\eeq
\beq\label{cl2}  \Lambda^{(1,1)}D\.\Psi=\la I\Psi\ra=\la \xi\.\Psi\ra,
\eeq
\beq\label{cl3} \Lambda^{(2,0)+(0,2)}D\.\Psi=\la J\Psi\ra\oplus  \la K\Psi\ra.
\eeq
The last relation actually yields a trivialization of $\Lambda^{(2,0)+(0,2)}D$ (and thus a $\SU(2)$
reduction of the structure group of $M$), but we will not need this in the sequel. 

We are now ready to prove the main result of this section:

\begin{ath}\label{dim5}
Any generalized Killing spinor on the $5$-sphere $\SM^5$ is a real Killing spinor.
\end{ath}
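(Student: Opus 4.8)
\medskip
The plan is to turn the two Clifford-valued master equations \eqref{curv-2} and \eqref{two-2} into pointwise algebraic constraints on the symmetric endomorphism $A$, using the algebraic dictionary \eqref{cl1}--\eqref{cl3} and the fact that on $\SM^5$ the curvature operator equals $-\id$, and then to close the argument globally as in dimension $4$. Normalizing $|\Psi|^2=1$, I first record the first-order structure equations of the reduction defined by $\Psi$. Since $I$ is Clifford multiplication by the parallel volume element and commutes with Clifford multiplication by vectors, differentiating \eqref{xi} along the generalized Killing equation and rewriting the resulting $2$-form $A(X)\wedge\xi$ by means of \eqref{cl1} yields $\nabla_X\xi=2\,L\,A(X)$ for all $X\in\T M$. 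Differentiating \eqref{t2} and comparing the four summands of \eqref{deco} likewise produces an explicit expression for $\nabla L$ in terms of $A$, $\xi$ and $L$. These identities, together with \eqref{t1}, are what I will feed into the final global step.

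Next I would extract the pointwise content of the curvature. On $\SM^5$ equation \eqref{curv-2} reads $T(\sigma)\cdot\Psi=(-\tfrac12\,\sigma+2\,A(\sigma))\cdot\Psi$ for every $2$-form $\sigma$, where $A$ is extended to $\Lambda^2M$ by $A(X\wedge Y)=A(X)\wedge A(Y)$. Since $T(\sigma)\.\Psi$ lies in $\T M\.\Psi$, I compare both sides inside \eqref{deco} via \eqref{cl1}--\eqref{cl3}: the component in $\la J\Psi\ra\oplus\la K\Psi\ra$ forces the $(2,0)+(0,2)$-part of $2A(\sigma)-\tfrac12\sigma$ to vanish, i.e. the extension of $A$ acts as $\tfrac14\,\id$ on $\Lambda^{(2,0)+(0,2)}D$, while the components in $\la\xi\.\Psi\ra$ and $D\.\Psi$ express $T$ through $A$, $L$ and $\xi$. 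The same comparison applied to \eqref{two-2} shows that $T^Z$ has no $(2,0)+(0,2)$-part and couples its $\xi\wedge\T M$- and $(1,1)$-pieces to $B(Z)$.

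I expect these constraints to force, at each point, that $\xi$ is an eigenvector of $A$ and that $A$ preserves $D$ and commutes there with $L$, the eigenvalues of $A|_D$ being constrained by $\Lambda^2A=\tfrac14\,\id$ on $\Lambda^{(2,0)+(0,2)}D$; the structure equation for $\xi$ then exhibits $\xi$ as a Killing field generating a Sasakian structure. Together with $\tr B=0$ and \eqref{three2} (which fixes $\tr A^2$ in terms of $a^2$ and $\scal$), this should pin $A$ down up to the single function $\mu=g(A\xi,\xi)$. For the global step I would follow the $4$-dimensional template: using the structure equation for $\xi$ and the divergence relation \eqref{three1}, build a function out of the eigenvalues of $A$ and the geometry of $\xi$, produce a conserved quantity analogous to the constant of the $4$-dimensional argument, and show it must vanish because the relevant eigenvalue (or length) function attains an extremum on the compact $M$; a Lichnerowicz--Obata argument may enter exactly as before. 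The resulting eigenvalue identity together with \eqref{three2} then gives $a$ the constant value of a Killing spinor, whence $\tr(A-\tfrac a5\,\id)^2=0$, so that $A=\tfrac a5\,\id$ and $\Psi$ is a real Killing spinor.

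The main obstacle is the pointwise reduction: in dimension $5$ the Clifford action mixes the $(1,1)$- and $(2,0)+(0,2)$-pieces through $L$, so disentangling the $\xi\wedge\T M$-contributions from the $B$-terms in order to prove that $A$ commutes with $L$ and that $\xi$ is an eigenvector requires care. The second delicate point is the global argument ruling out a non-constant eigenvalue $\mu$ along $\xi$, where the compactness of $M$ and the positivity of $\scal$ must enter decisively.
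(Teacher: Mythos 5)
Your opening moves match the paper's: on $\SM^5$ the curvature operator is $-\id$, so \eqref{curv-2} forces the $\Lambda^{(2,0)+(0,2)}D$-component of $A(\sigma)-\tfrac14\sigma$ to vanish for \emph{every} $2$-form $\sigma$ (this is stronger than your phrasing that $\Lambda^2A$ acts as $\tfrac14\id$ on $\Lambda^{(2,0)+(0,2)}D$, and the stronger statement is what one actually needs: it is by taking $\sigma=\xi\wedge Y$ that one proves $\xi$ is an eigenvector of $A$). The identity $\nabla_X\xi=2LA(X)$ is also correct and is the hinge of the whole proof. But you leave the pointwise reduction --- $A\xi=\alpha\xi$, $A(D)\subset D$, $[L,A]=0$, and the eigenvalue pairing $\alpha_1=\alpha_2$, $\alpha_3=\alpha_4=\tfrac1{4\alpha_1}$ --- as something you ``expect''; that is exactly where the work lies. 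The paper proves $\zeta:=A\xi-g(A\xi,\xi)\xi=0$ by a short rank argument (if $\zeta\neq0$ the image of $A$ is forced into $\la\xi,\zeta,L\zeta\ra$, and testing \eqref{11} on a vector of $\Ker(A)$ gives a contradiction), and then reads off the commutation of $L$ and $A$ from the diagonalized constraint $(\a_i\a_j-\tfrac14)\,e_i\wedge e_j\in\Lambda^{(1,1)}D$.

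The more serious gap is your global step. In dimension $5$ there is no chiral splitting $\Psi=\Psi^++\Psi^-$, hence no analogue of the function $h=|\Psi^-|^2$, of the vector field $\eta$, or of the conserved quantity $C$ of the $4$-dimensional argument, and you propose no concrete substitute; a Lichnerowicz--Obata step is neither available nor needed here. The paper's endgame is entirely different and purely pointwise once one global fact is used: since $[L,A]=0$, the endomorphism $\nabla\xi=2LA$ is skew-symmetric, so $\xi$ is a \emph{unit} Killing field on the round sphere; every Killing field on $\SM^5$ is linear, $\xi_x=Mx$ with $M\in\so(6)$, and $|\xi|\equiv1$ forces $M^2=-\id$, whence $(\nabla\xi)^2=-\id+\xi\otimes\xi$. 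Combined with $\nabla\xi=2LA$ and \eqref{t1} this yields $4A^2|_D=\id$, so the eigenvalues of $A|_D$ are $\pm\tfrac12$, and the remaining unknown $\alpha=g(A\xi,\xi)$ is pinned down pointwise by \eqref{three2} ($a^2-\tr A^2=5$), which also excludes mixed signs. You do remark that $\xi$ generates a Sasakian structure, but you never exploit it --- that observation, not a maximum principle, is what closes the proof.
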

\proof
Let $\Psi$ be a generalized Killing spinor on $\SM^5$ satisfying \eqref{gks}. The curvature
endomorphism of the sphere is minus the identity on $2$-forms, so \eqref{curv-2} reads 
$$T(X\wedge Y)\.\Psi=\left[2AX\wedge AY-\tfrac12 X\wedge Y\right]\.\Psi,\qquad\forall X,Y\in \T M.$$
From \eqref{cl1}--\eqref{cl3} we deduce that 
\beq\label{11}AX\wedge AY-\tfrac14 X\wedge Y\in\xi\wedge\T M\oplus \Lambda^{(1,1)}D,\qquad\forall
X,Y\in \T M.
\eeq 
Let $A\xi=\alpha \xi+\zeta$ with $\zeta \in D$.
Taking $X=\xi$ in \eqref{11} we obtain that $\zeta\wedge AY$ belongs to $\Lambda^{(1,1)}D$ for
every $Y$ orthogonal to $A\xi$. Assume that 
$\zeta\ne0$. Then $AY$ belongs to the subspace spanned by $\zeta$ and $L\zeta$ for all $Y$
orthogonal to $A\xi$, whence the image of $A$ is a subset of 
$\la\xi,\zeta,L\zeta\ra$. Since $A$ is symmetric, $\Ker(A)$ contains the orthogonal complement of
$\la\xi,\zeta,L\zeta\ra$ in $\T M$. Let $Y$ be a vector in this orthogonal complement. By \eqref{11}
again, $X\wedge Y \in\Lambda^{(1,1)}D $ for all $X\in D$, which is a contradiction for $X=\zeta$.
This shows that 
$\zeta=0$, so $D$ is left invariant by $A$. 

Let $\{e_1,e_2,e_3,e_4\}$ be an orthonormal basis of eigenvectors of the restriction of $A$ to $D$
corresponding to the eigenvalues $\alpha_i$. Thus in the basis $\{\xi,e_1,e_2,e_3,e_4\}$ the matrix
of $A$ is diagonal, with entries $\{\a,\a_1,\a_2,\a_3,\a_4\}$. 
Equation \eqref{11} implies that 
$$(\a_i\a_j-\tfrac14)\,e_i\wedge e_j\in \Lambda^{(1,1)}D$$ for all subscripts $1\le i,j\le 4.$
This shows that for every subscript $i$, there are at least two other subscripts $j$ and $k$ such
that $\a_i\a_j=\a_i\a_k=\tfrac14$. Up to a permutation we can thus assume that $\a_1=\a_2$ and
$\a_3=\a_4=\tfrac1{4\a_1}$. We see that either $\a_1=\a_2=\a_3=\a_4$, or $\a_1\ne \a_3$, in which
case $e_1\wedge e_2$ and $e_3\wedge e_4$ belong to $\Lambda^{(1,1)}D$. In this last case $L$
preserves the eigenspaces $\la e_1,e_2\ra$ and $\la e_3,e_4\ra$ of $A$, so in both cases $L$ and $A$
commute. 

We now take a covariant derivative with respect to an arbitrary vector $X$ in \eqref{xi} and use
\eqref{t2} to obtain 
\bea \nabla_X\xi\.\Psi&=&\nabla_X(\xi\.\Psi)-\xi\.\nabla_X\Psi\\
&=&I(AX\.\Psi)-\xi\.AX\.\Psi=AX\.I\Psi+AX\.\xi\.\Psi+2g(AX,\xi)\Psi\\
&=&2AX\.I\Psi+2g(AX,\xi)\Psi=2LAX\.\Psi,
\eea
whence 
\beq\label{nxi}\nabla_X\xi=2LAX,\qquad\forall X\in \T M. \eeq
Since $L$ and $A$ commute, $LA$ is skew-symmetric, thus $\xi$ is a Killing vector field on $\SM^5$,
i.e. there exists a skew-symmetric matrix $M\in \so(6)$ such that $\xi_x=Mx$ for every
$x\in\SM^5\subset \RM^6$. Moreover, $\xi$ has constant length 1, thus $M$ is orthogonal, so
$M^2=-\id$. On the other hand, the covariant derivative of $\xi$ can also be computed by projecting
in $\T \SM^5$ the Euclidean covariant derivative in $\RM^6$. We thus obtain
$(\nabla_X\xi)_x=\mathrm{pr}_{T_x\SM^5}(MX)=MX-\la x,MX\ra x$. Let $\f$ denote the skew-symmetric
endomorphism corresponding to $\nabla\xi$. The previous relation implies
$$\f^2(X)=\f(MX-\la x,MX\ra x)=M^2X-\la x,MX\ra Mx=-X+\la Mx,X\ra Mx=-X+g(X,\xi)\xi.$$
(Note that this relation could also have been obtained by saying that every unit Killing vector
field on $\SM^5$ is Sasakian). Comparing it with \eqref{nxi} and using \eqref{t1} yields
$$-X+g(X,\xi)\xi=4L^2A^2X=-4A^2X+4g(A^2X,\xi)\xi.$$
Consequently, the restriction of $4A^2$ to $D$ is the identity, thus $\tr A^2=\alpha^2+1$. Moreover
the eigenvalues $\a_i$ of $A|_D$ belong to $\{\pm\tfrac12\}$. Since these eigenvalues are pairwise
equal, assuming that they are not all equal then $\tr(A|_D)=0$ so $a=\tr A=\alpha$. This contradicts
Equation \eqref{three2} which in our case reads $a^2-\tr A^2=\tfrac14\scal=5$. In the remaining
cases the eigenvalues of $A|_D$ are all equal to either 
$\a_1=\tfrac12$ or $\a_1=-\tfrac12$, so $a=\alpha+4\a_1$ and from \eqref{three2} again we get
$(\alpha+4\a_1)^2-(1+\a^2)=5$, which finally gives 
$\alpha\a_1=\tfrac14$ i.e. $\alpha=\alpha_1$ and $A$ is a constant scalar matrix $\pm\tfrac12\id$.
This finishes the proof of the theorem. \qed

\subsection{The case of dimension $6$ and $7$}\label{4.4} As already
mentioned, generalized Killing spinors are equivalent to half-flat  $\SU(3)$-structures
\cite{chs,hi03} in dimension $6$ and to co-calibrated $\mathrm G_2$-structures \cite{cs06,fg}
in dimension $7$. 

Using this correspondence, examples of Einstein metrics with generalized Killing spinors in these
dimensions can be found in the recent litterature. In \cite{sh}, Table 3, p. 74, Schulte-Hengesbach constructs a half-flat 
structure on  the Riemannian product $\SM^3 \times \SM^3$ (see also \cite{sh}, Remark 1.12, p. 87).
In fact Schulte-Hengesbach classifies all half-flat structures on the product of two 3-dimensional spheres, and it turns out that for a 
certain choice of the parameters the corresponding structure is compatible with the product metric $\SM^3\times \SM^3$
(see also  \cite{ms}, p. 14).

The Fubini-Study metric on $\CM \rm P^3$ admits a half-flat structure with respect to the
non-integrable almost complex structure.  This example can be found in  \cite{conti}, Section 4.5,
Prop. 4.12. Conti considers the complex projective space  $\CM \mathrm P^3$ realized as a
hypersurface in the total space of the vector bundle of anti-self-dual $2$-forms 
$\Lambda^2_- \SM^4 $ equipped with the parallel $\mathrm G_2$-structure found by Bryant and Salamon
\cite{bs}.

In fact, using the methods of \cite{bfgk}, Chapter 5.4,  it is easy to show that on the homogeneous
spaces  $\SM^3\times \SM^3$, $\CM \rm P^3$ and the flag 
manifold $\SU(3)/\TM^2$ there exists a $1$-parameter family of metrics with a generalized Killing
spinor, given as a constant map on the respective groups. For each of the three cases, this family
of metrics contains exactly two Einstein metrics. One of these Einstein metrics is compatible with
a nearly K\"ahler structure and the corresponding spinor is a Killing spinor. The second Einstein
metric is the standard K\"ahler-Einstein metric on the two twistor spaces $\CM \rm
P^3$ and $\SU(3)/\TM^2$ and the product metric on $\SM^3\times \SM^3$. Note that in the first two
cases, the generalized Killing spinor turns out to be a K\"ahlerian Killing spinor \cite{ki86,a95}.

In dimension 7, examples of generalized Killing spinors which are not Killing were recently
constructed by Agricola and Friedrich \cite{af10} on every 3-Sasakian metric (it is well known that
$3$-Sasakian metrics are automatically Einstein). They actually show that every 7-dimensional
3-Sasakian manifold carries a so-called canonical spinor $\Psi_0$, which is not a Killing spinor
itself, but generates the 3-dimensional space of Killing spinors in the sense that the three Killing
spinors of the $3$-Sasakian structure are obtained by Clifford product of $\Psi_0$ with the three
Sasakian Killing vector fields. Agricola and Friedrich show that the $\G_2$-structure defined by
$\Psi_0$ is co-calibrated not only on the original metric $g$ but on the whole 1-parameter family
of metrics $g_t$ obtained by rescaling  $g$ along the 3-dimensional Sasakian distribution. Thus
$\Psi_0$ is a generalized Killing spinor for each metric in this family, which contains two Einstein
metrics: the 
$3$-Sasakian metric $g$ for $t=1$ and a proper $\mathrm G_2$-metric for $t=\tfrac15$ (cf.
\cite{fkms}). For this second Einstein metric, $\Psi_0$ turns out to be a genuine Killing spinor. 

Note in particular that this construction gives a generalized Killing spinor which is not Killing
on the standard sphere $\SM^7$.

\section{Final remarks and open questions}

In view of the correspondence between generalized Killing spinors and hypersurface embeddings in
manifolds with parallel spinors, we obtain the following corollaries of our main results:

\begin{ecor}
Let $(M^4,g)$ be a compact Einstein hypersurface with positive scalar curvature in a Riemannian
product $(\Z^{5},g^\Z)=\RM\times (N^4,h)$ where $(N,h)$ is simply connected and hyperk\"ahler (e.g.
the flat space $\RM^4$, a $\mathrm{K}3$ surface, an Eguchi-Hanson manifold, a Taub-NUT or a
Kronheimer ALE space). Then $(N,h)$ is flat and $(M^4,g)=\SM^4$.
\end{ecor}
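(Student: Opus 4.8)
The plan is to restrict a parallel spinor of the ambient product to $M$, invoke Theorem~\ref{dim4}, and then read off the flatness of $N$ from the Gauss and Codazzi equations. Since $(N,h)$ is simply connected and hyperk\"ahler it carries a parallel spinor and is Ricci-flat; as $\RM$ is flat, the product $\Z=\RM\times N$ therefore also carries a parallel spinor $\Phi$ and is itself Ricci-flat, and I normalize $\Phi$ to unit length. By the result recalled in the introduction \cite{bgm,friedrich:98}, the restriction $\Psi:=\Phi|_M$ is a generalized Killing spinor on $M$ whose symmetric endomorphism $A$ equals half the shape operator of the hypersurface $M\subset\Z$; moreover $\Psi$ is nowhere zero because $|\Phi|$ is a nonzero constant. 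As $(M,g)$ is compact Einstein with $\scal>0$, Theorem~\ref{dim4} immediately gives that $(M,g)$ is isometric to $\SM^4$ and that $\Psi$ is a Killing spinor, so $A=c\,\id$ for a constant $c\in\RM$. This already yields the assertion $M=\SM^4$; recalling from the proof of Theorem~\ref{dim4} that $a^2=\tfrac43\lambda$ with $a=\tr A=4c$ and $\lambda=\tfrac{\scal}4$, one finds $12c^2=\lambda$, equivalently $4c^2=\tfrac\lambda3$, which is exactly the sectional curvature of the round $\SM^4$.

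Next I would prove that the ambient curvature tensor vanishes identically along $M$. The shape operator $2A=2c\,\id$ is parallel, so the Codazzi equation forces $R^\Z(X,Y,Z,\nu)=0$ for all vectors tangent to $M$, where $\nu$ is the unit normal. For the tangential components the Gauss equation gives $R^\Z(X,Y,Z,W)=R^M(X,Y,Z,W)-4c^2\big(g(X,W)g(Y,Z)-g(X,Z)g(Y,W)\big)$; as $R^M(X,Y,Z,W)=\tfrac\lambda3\big(g(X,W)g(Y,Z)-g(X,Z)g(Y,W)\big)$ is the round-sphere curvature and $4c^2=\tfrac\lambda3$, the right-hand side vanishes. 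Finally, contracting the Gauss equation and using $\Ric^\Z=0$, $\Ric^M=\lambda g$, $\tr(2A)=8c$ and $(2A)^2=4c^2\,\id$ yields $R^\Z(\nu,Y,Z,\nu)=(12c^2-\lambda)\,g(Y,Z)$, which vanishes since $12c^2=\lambda$. These three families exhaust the components of $R^\Z$ at points of $M$, so $R^\Z\equiv 0$ along $M$.

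It then remains to globalize. Let $\partial_t$ be the unit vector field tangent to the $\RM$-factor. Because $\Z=\RM\times N$ is a Riemannian product, its curvature at a point $(t,q)$ coincides with $R^N$ at $q$; hence $R^N$ vanishes on the image $\pi_N(M)$ of the projection to $N$. The height function $\pi_\RM|_M$ attains a maximum on the compact manifold $M$, and at a maximum point $p$ the tangent space $T_pM$ is orthogonal to $\partial_t$, which forces $\nu_p=\pm\partial_t$ and hence $T_pM=T_qN$ with $q=\pi_N(p)$; thus $\pi_N$ is a local diffeomorphism at $p$ and $\pi_N(M)$ has nonempty interior. Since hyperk\"ahler metrics are real-analytic (being Ricci-flat K\"ahler, hence Einstein), the real-analytic tensor $R^N$ vanishes on a nonempty open subset of the connected manifold $N$ and therefore vanishes everywhere. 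Consequently $(N,h)$ is flat; together with the isometry $(M,g)\cong\SM^4$ obtained above this establishes the corollary, and when $N$ is complete it is moreover isometric to $\RM^4$, so that $\Z=\RM^5$ and $M=\SM^4\subset\RM^5$.

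I expect the main obstacle to lie in this last globalization step rather than in the local computation: the Gauss--Codazzi argument only yields flatness of $\Z$, and hence of $N$, along the compact hypersurface, and upgrading this to global flatness genuinely requires both that $\pi_N(M)$ contain an open set---which the height-function argument supplies---and the real-analyticity of the hyperk\"ahler metric. By comparison, the reduction to Theorem~\ref{dim4} is automatic, and the cancellation in the tangential Gauss equation together with the vanishing of the mixed and normal components is forced once the relation $12c^2=\lambda$ is in hand.
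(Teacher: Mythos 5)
Your proof is correct, and its first half --- restricting the parallel spinor of $\Z=\RM\times N$ to $M$ to obtain a nowhere-vanishing generalized Killing spinor with $A$ equal to half the shape operator, then invoking Theorem~\ref{dim4} --- is exactly the paper's argument. Where you genuinely diverge is in deducing flatness of $N$: the paper simply cites the uniqueness part of Theorem 1.1 of \cite{amm} (in the analytic category the germ of the ambient metric along $M$ is determined by the Cauchy data $(g,A)$, and the pair $(\SM^4,c\,\id)$ is realized by the totally umbilic embedding $\SM^4\subset\RM^5$, so $g^{\Z}$ must be flat near $M$), whereas you verify the local flatness by hand: the relation $12c^2=\lambda$ extracted from the proof of Theorem~\ref{dim4} makes the tangential Gauss equation cancel against the constant curvature of $\SM^4$, the Codazzi equation kills the mixed components because the shape operator is parallel, and the contracted Gauss equation together with $\Ric^{\Z}=0$ kills the doubly normal components, which exhausts $R^{\Z}$ along a hypersurface. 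Both routes only yield flatness along $M$ and require the same globalization, which the paper leaves implicit but you supply explicitly and correctly: the height function on the compact $M$ has a critical point at which $\pi_N$ is a local diffeomorphism, so $R^N$ vanishes on a nonempty open subset of $N$, and the real-analyticity of the hyperk\"ahler (hence Einstein) metric propagates this to all of the connected manifold $N$. Your version is more elementary and self-contained, avoiding the Cauchy--Kovalevskaya machinery of \cite{amm} at the cost of some Gauss--Codazzi bookkeeping; the paper's citation is shorter but buries both the local statement and the analytic continuation in the reference.
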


\begin{proof}
The manifold $(\Z,g^\Z)$ is spin and has a parallel spinor. By \cite{bgm}, Equation (30), $(M^4,g)$
carries a generalized Killing spinor, so $(M^4,g)=\SM^4$ by Theorem \ref{dim4}. From the uniqueness
part of Theorem 1.1 in \cite{amm} we deduce that the ambient metric $g^\Z$ is flat. 
\end{proof}

A similar argument together with Theorem \ref{dim5} yields:

\begin{ecor}
Let $(\Z^{6},g^\Z)$ be a simply connected Ricci-flat K\"ahler threefold (e.g. the flat space
$\RM^6$, a Calabi-Yau threefold, or 
$\RM^2\times \mathrm{K}3$). If $\SM^5$ has an isometric embedding in $(\Z,g^\Z)$, then $\Z$ is a
flat space.
\end{ecor}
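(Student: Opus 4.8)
The plan is to reduce this corollary to Theorem~\ref{dim5} via the correspondence between parallel spinors on a spin manifold and generalized Killing spinors on its hypersurfaces. First I would observe that a simply connected Ricci-flat K\"ahler threefold $(\Z^6,g^\Z)$ has holonomy contained in $\SU(3)$, hence is spin and carries at least one parallel spinor $\Phi$ (this is the defining feature of the Calabi-Yau case, and the listed examples $\RM^6$, Calabi-Yau threefolds, and $\RM^2\times\mathrm{K}3$ all fall under this heading). If $\SM^5$ embeds isometrically in $(\Z,g^\Z)$ as a hypersurface, then by the restriction result recalled in the introduction (cf.~\cite{bgm,friedrich:98}), the restriction of $\Phi$ to $\SM^5$ is a generalized Killing spinor with respect to the symmetric endomorphism $A$ equal to half the second fundamental form of the embedding.

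Next I would invoke Theorem~\ref{dim5} directly: any generalized Killing spinor on $\SM^5$ is a real Killing spinor. This forces $A$ to be a non-zero constant multiple of the identity, so the second fundamental form of $\SM^5$ in $\Z$ is a constant multiple of $g$, i.e.\ $\SM^5$ is a totally umbilic hypersurface with constant mean curvature. At this point the geometry is rigidly constrained, and the final step is to feed this back into the ambient manifold to conclude flatness.

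For the flatness conclusion I would argue via the uniqueness statement in Theorem~1.1 of~\cite{amm}, exactly as in the preceding corollary for dimension~$4$. Since $A$ is a constant multiple of the identity (the Killing spinor case), the ambient metric constructed by the Cauchy-Kovalevskaya / embedding procedure of \cite{ba,amm} is uniquely determined up to isometry near $\SM^5$, and the explicit model metric carrying a Killing spinor on $\SM^5$ is the flat cone (equivalently flat $\RM^6$). By the uniqueness part of \cite{amm}, the germ of $(\Z,g^\Z)$ along $\SM^5$ must coincide with this flat model, so $g^\Z$ is flat in a neighbourhood of $\SM^5$; since $\Z$ is a simply connected Ricci-flat K\"ahler manifold, local flatness propagates (the curvature of a Calabi-Yau metric is determined by its values on a hypersurface through the parallel-spinor rigidity) and $\Z$ is globally flat.

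The main obstacle I anticipate is the last step: extracting global flatness of $\Z$ from the local umbilicity of $\SM^5$. The clean part is the reduction to a Killing spinor via Theorem~\ref{dim5}; the delicate part is justifying that the uniqueness in \cite{amm}, which a priori concerns the \emph{germ} of the ambient metric along the hypersurface, genuinely pins down $g^\Z$ as flat rather than merely as the standard Sasaki-Einstein cone metric. I would therefore be careful to identify precisely which flat model the Killing spinor on $\SM^5$ produces, and to check that the hyperk\"ahler/Calabi-Yau hypotheses rule out the competing non-flat cone structures, mirroring the argument already given for the $4$-dimensional corollary.
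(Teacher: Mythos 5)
Your proposal is correct and follows essentially the same route as the paper, which proves this corollary by the ``similar argument'' to the four-dimensional case: the parallel spinor of the Ricci-flat K\"ahler (hence holonomy $\subset\SU(3)$) ambient space restricts to a generalized Killing spinor on the hypersurface $\SM^5$, Theorem~\ref{dim5} forces it to be a genuine Killing spinor, and the uniqueness part of Theorem~1.1 in \cite{amm} identifies the ambient metric with the flat model (the cone over the round $\SM^5$). Your closing worry about competing non-flat cones is moot here, since the metric cone over the \emph{round} $\SM^5$ is exactly flat $\RM^6$, which is precisely the model the paper's uniqueness argument pins down.
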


We finally list some open questions which arose during the preparation of this work, which we think
worth of further investigation:

\begin{itemize}
\item Does the sphere $\SM^n$ carry generalized Killing spinors which are not Killing? From the
above, we know that the answer is ``yes'' for $n=3$ and $n=7$, ``no'' for $n=2$, $n=4$ and $n=5$, and
unknown for $n=6$ and $n\ge 8$. It is surprising that one does not know whether a half-flat
structure on $\SM^6$ is necessarily nearly K\"ahler.
\item Find all generalized Killing spinors on a given spin manifold. To our knowledge, the only
cases where a complete answer is available are given by Theorems \ref{dim4} and \ref{dim5}. Even
for one of the simplest possible manifolds, the round sphere $\SM^3$, the set of generalized Killing
spinors is unknown.
\item In the real analytic case, a spin manifold with generalized Killing spinors embeds as
a hypersurface in a manifold with parallel spinors (\cite{amm}, Theorem 1.1). What is (locally) the
ambient metric corresponding to the examples above, e.g. for the spinors on $\SM^3$ constructed in
Section \ref{4.1}? This metric is interesting since it is hyperk\"ahler, non-flat, and contains
round spheres as hypersurfaces.
\item In all available examples of generalized Killing spinors on Einstein manifolds, the
symmetric tensor $A$ has constant eigenvalues. Is this a general phenomenon?
\item Is it possible to construct examples of generalized Killing spinors on 3-Sasakian manifolds
of dimension $4n+3\ge 11$
using methods similar to those in \cite{af10}?
\end{itemize}

\end{document}